\def\<{\langle}
\def\>{\rangle}
\def\E{{\bf E}}
\def\AA{{\mathcal A}}
\def\BB{{\mathcal B}}
\def\DD{{\mathcal D}}
\def\MM{{\mathcal M}}
\def\PP{{\mathcal P}}
\def\SS{{\mathcal S}}
\def\TT{{\mathcal T}}
\def\bbC{\mathbb{C}}
\def\bbN{\mathbb{N}}
\def\bbE{\mathbf{E}}
\def\1{\mathbf{1}}
\newcommand{\toep}{\intercal}
\newcommand{\CD}{{\rm{CD}}}
	\numberwithin{equation}{section}
\newtheorem{theorem}{Theorem}[section]
\newtheorem{corollary}[theorem]{Corollary}
\newtheorem{lemma}[theorem]{Lemma}
\newtheorem{definition}[theorem]{Definition}
\theoremstyle{definition}
\newtheorem{remark}[theorem]{Remark}
\newtheorem{example}[theorem]{Example}
\newtheorem{examples}[theorem]{Examples}
\begin{document}
\title{Algebras of block Toeplitz matrices with commuting entries}
\author{Muhammad Ahsan Khan}
\address{Abdus Salam School of Mathematical Sciences, GC University, Lahore, Pakistan}
\email{ahsan.khan@sms.edu.pk,smsahsankhan@gmail.com}
\author{Dan Timotin}
\address{Simion Stoilow Institute of Mathematics of the Romanian Academy, Bucharest, Romania}

\keywords{Block Toeplitz matrices, maximal algebras}
\subjclass[2010]{15B05, 15A30}
\email{dan.timotin@imar.ro}

\linespread{1.1}

\begin{abstract}
	The maximal algebras of scalar Toeplitz matrices are known to be formed by generalized circulants. The identification of algebras consisting of block Toeplitz matrices is a harder problem, that has received little attention up to now. We consider the case when the block entries of the matrices belong to a commutative algebra $ \AA $. After obtaining some general results, we classify all the maximal algebras for certain particular cases of $ \AA $.
\end{abstract}
\maketitle

\section{Introduction}

Toeplitz matrices, that is, matrices whose entries are constant along diagonals, represent a well studied class of structured
matrices (see, for instance,~\cite{ grenand-szego, ioh, widom}). In particular,  multiplication properties of Toeplitz matrices have been discussed in~\cite{shalom, zimmerman, gu-patton}. Since the product of two Toeplitz matrices is not necessarily a Toeplitz matrix, it is interesting to investigate subsets of Toeplitz matrices that are closed under multiplication---that is, algebras of Toeplitz matrices. It is not obvious that such nontrivial algebras exist (besides the obvious case of the scalar multiples of the identity); but it turns out that one can identify all such maximal algebras~\cite{shalom}; they are given by the so-called $ k $-circulants~\cite{davis}, a generalization of usual circulants. 

The related area of block Toeplitz matrices (see~\cite{botsil, GGK}) is less studied. 
The determination of all maximal algebras formed by such matrices  has been proposed as an open question in~\cite{shalom}. However, the problem seems to be rather hard, and not much progress has been done since. An attempt to a partial solution has been done in~\cite{MAK}, where a certain class of maximal algebras has been determined. We pursue here this investigation, obtaining some general structure properties of such algebras, as well as classifications in  particular cases.

The plan of the paper is the following. After fixing notation and some preliminaries, we discuss in section~3  products of block Toeplitz matrices with commuting entries. Then in the sequel of the paper we consider algebras of block Toeplitz matrices with entries belonging to a fixed maximal commutative algebra of matrices.  In Section~4 we obtain several general properties of such maximal algebras. Various concrete algebras are investigated first in Section~5, and then, more relevantly, in sections 6 and 7, where a complete classification of all maximal block Toeplitz matrices with entries in a singly generated algebra is obtained.

\section{Notations and preliminaries}

We denote by $ \MM_{n,d} $ the space of $ n\times n $ block matrices whose entries are matrices of order $d$; we also write $ \MM_{n}=\MM_{n,1}  $. We prefer to label the indices from $ 0 $ to $ n-1 $; so $T\in \MM_{n,d} $ is written $ T= (T_{i,j})_{i,j=0}^{n-1}$, with $ T_{i,j}\in\MM_d $. Then
$\mathcal{T}_{n,d} \subset \MM_{n,d} $ is the space of block Toeplitz matrices $A = (A_{i-j})_{i,j=0}^{n-1}$. 

If $ \AA $ is a collection of $ d\times d $ matrices, we will use the following notations:
\begin{itemize}
	\item $ \MM_{n,d}[\AA] $ is the collection of $ n\times n $ block matrices whose entries all belong to $ \AA $;
	\item $ \TT_{n,d}[\AA] $ is the collection of $ n\times n $ block Toeplitz matrices whose entries all belong to $ \AA $;
	
	\item $ \DD_{n,d}[\AA] $ is the collection of $ n\times n $ diagonal block Toeplitz matrices whose entries all belong to $ \AA $.
\end{itemize}
Obviously $ \DD_{n,d}[\AA]\subset \TT_{n,d}[\AA] \subset \MM_{n,d}[\AA] $.
The notation will be used mainly when $ \AA $ is an algebra.

A matrix $ T= (T_{i,j})_{i,j=0}^{n-1}$ has $ 2n-1 $ diagonals, corresponding to $ i-j=k $ ($ k=-(n-1), \dots, 0, \dots, n-1 $). A block Toeplitz matrix has all its diagonals constant.

In our context an important role is played by a related notion. A \emph{cyclic diagonal of order $ k $} (where $ k=0,1, \dots, n-1 $) is a matrix $ T= (T_{i,j})_{i,j=0}^{n-1}$ with  $ T_{i,j}\not=0 $ only for 
$ i-j=k $ or $ i-j=k-n $.  We denote the set of all  cyclic diagonals of order $k$ by $\CD_k$; also, $ \CD_k^\TT=\CD_k\cap\mathcal{T}_{n,d} $
A block Toeplitz cyclic diagonal $ A $ is determined by the two elements $ A_k $ and $ A_{k-n} $ for $ k\ge 1 $ and for the single element $ A_0 $ for $ k=0 $.

The next two lemmas concerning the product of cyclic diagonals are proved by direct computations.

\begin{lemma}\label{le:prod gen perm}
	Suppose $ A\in \CD_k $, $ B\in\CD_\ell $. Then:
	\begin{enumerate}
		\item If $ k+\ell\le n-1 $, then $ AB\in \CD_{k+\ell} $.
		
		\item If $ k+\ell\ge n $, then $ AB\in \CD_{k+\ell-n} $.
	\end{enumerate}
\end{lemma}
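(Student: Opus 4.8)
The plan is to reduce the statement to a single-index bookkeeping argument based on the observation that a cyclic diagonal has at most one nonzero block in each block-row. First I would reformulate the support condition: $A\in\CD_k$ means that $A_{i,m}\neq 0$ forces $i-m\in\{k,k-n\}$, which for indices in $\{0,\dots,n-1\}$ is equivalent to the single congruence $m\equiv i-k\pmod n$. Thus in the $i$-th block-row of $A$ the only column that can carry a nonzero block is $m=(i-k)\bmod n$; equivalently, the support of $A$ sits on the graph of the cyclic permutation $i\mapsto(i-k)\bmod n$. The same description applies to $B\in\CD_\ell$ with $k$ replaced by $\ell$.

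Next I would expand the product entrywise. Since $(AB)_{i,j}=\sum_{m}A_{i,m}B_{m,j}$ and $A$ has a unique admissible $m=(i-k)\bmod n$ in row $i$, the sum collapses to the single term $A_{i,m}B_{m,j}$ with that $m$. This term can be nonzero only if in addition $B_{m,j}\neq 0$, i.e. $m-j\equiv\ell\pmod n$. Adding the two congruences $i-m\equiv k$ and $m-j\equiv\ell$ gives $i-j\equiv k+\ell\pmod n$. Hence every nonzero block of $AB$ lies on a diagonal whose offset $i-j$ is congruent to $k+\ell$ modulo $n$; as $i-j$ ranges over $\{-(n-1),\dots,n-1\}$, this congruence class meets the admissible range in exactly the offsets that define a single cyclic diagonal.

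It remains to read off the order of that cyclic diagonal in the two regimes. If $k+\ell\le n-1$, then the offsets in $\{-(n-1),\dots,n-1\}$ congruent to $k+\ell$ are precisely $k+\ell$ and $(k+\ell)-n$, so $AB\in\CD_{k+\ell}$. If $k+\ell\ge n$, set $r=k+\ell-n\in\{0,\dots,n-2\}$; the admissible offsets congruent to $k+\ell$ are then $r$ and $r-n$, so $AB\in\CD_{r}=\CD_{k+\ell-n}$. In both cases one checks separately the degenerate value (offset $0$, where the partner offset $-n$ falls out of range and only the main diagonal survives) to confirm the count, which is consistent with the definition of $\CD_0$.

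The argument involves no genuine obstacle, which matches the paper's remark that the lemma is proved by direct computation; the one place requiring care is the boundary bookkeeping in the last step---verifying that the single residue class modulo $n$ intersects the interval $[-(n-1),n-1]$ in exactly the two offsets $\{r,r-n\}$ defining $\CD_r$. The feature that makes everything clean, and that I would emphasize, is that each cyclic diagonal carries at most one nonzero block per row: this forces the convolution sum to collapse to a single term and rules out any cancellation or spreading of the support.
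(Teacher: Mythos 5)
Your proof is correct and follows the same route the paper intends when it says the lemma "is proved by direct computations": expand $(AB)_{i,j}$ entrywise and track the support, the congruence $i-j\equiv k+\ell \pmod n$ being exactly the bookkeeping behind the paper's explicit formulas in Lemma~2.2. The collapse of the sum to a single term and the boundary check on which offsets of the residue class lie in $[-(n-1),n-1]$ are handled correctly, so nothing is missing.
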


\begin{lemma}\label{le:prod gen perm toeplitz}
	Suppose $ A=(A_{i-j})\in \CD_k^\TT $, $ B=(B_{i-j})\in \CD_\ell^\TT $, and $ C=AB $. 
	\begin{enumerate}
		\item If $ k+\ell\le n-1 $, then  $ C=(C_{ij}) \in\CD_{k+\ell}$, and its nonzero entries are given by the following formulas:
		\[
		\begin{split}
		C_{j+k+\ell, j}&=A_kB_\ell \text{ if } j+k+\ell\le n-1,\\
		C_{j+k+\ell-n, j}&=A_{k-n}B_\ell \text{ if } j+k+\ell\ge n,\ j+\ell\le n-1,\\
		C_{j+k+\ell-n, j}&=A_{k}B_{\ell-n} \text{ if } \ j+\ell\ge n.
		\end{split}
		\]
		\item If $ k+\ell\ge n $, then  $ C=(C_{ij}) \in\CD_{k+\ell-n}$, and its nonzero entries are given by the following formulas:
		\[
		\begin{split}
		C_{j+k+\ell-n, j}&=A_{k-n}B_\ell \text{ if }  j+\ell\le n-1,\\
		C_{j+k+\ell-n, j}&=A_{k}B_{\ell-n} \text{ if } \ j+\ell\ge n,\ j+k+\ell\le 2n-1,\\
		C_{j+k+\ell-2n, j}&=A_{k-n}B_{\ell-n} \text{ if } \ j+\ell\ge n,\ j+k+\ell\ge 2n.
		\end{split}
		\]
	\end{enumerate}	
\end{lemma}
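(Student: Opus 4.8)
The plan is to compute the product entrywise and to track, for each output position, which single term of the convolution sum survives. Write $A=(A_{i-j})$ and $B=(B_{i-j})$, where the block $A_m$ is nonzero only for $m\in\{k,k-n\}$ and $B_m$ only for $m\in\{\ell,\ell-n\}$. Then the $(i,j)$ block of $C=AB$ is
\[
C_{i,j}=\sum_{t=0}^{n-1}A_{i-t}\,B_{t-j},
\]
and a term is nonzero exactly when $i-t\in\{k,k-n\}$ and $t-j\in\{\ell,\ell-n\}$, that is, when
\[
t\in\{i-k,\ i-k+n\}\cap\{j+\ell,\ j+\ell-n\}\cap\{0,\dots,n-1\}.
\]

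The key observation I would make first is that the two candidate values $i-k$ and $i-k+n$ coming from $A$ differ by $n$, while the admissible range $\{0,\dots,n-1\}$ has width $n$; hence at most one of them lies in range, and likewise for the pair $j+\ell,\ j+\ell-n$ coming from $B$. Consequently the intersection above contains at most one index $t$, so every entry of $C$ is a single product $A_\bullet B_\bullet$ rather than a genuine sum. It then remains, for each fixed column $j$, to locate the unique nonzero row $i$ and read off the value. By Lemma~\ref{le:prod gen perm} the nonzero rows satisfy $i-j\in\{k+\ell,\,k+\ell-n\}$ in case~(1) and $i-j\in\{k+\ell-n,\,k+\ell-2n\}$ in case~(2), so the admissible $i$ is pinned down by $j$ together with the size of $j+k+\ell$ relative to $n$ (resp.\ $2n$). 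For each such $i$ I would solve $t=i-k$ or $t=i-k+n$ for the surviving index, verify it also meets the $B$-constraint, and substitute; the choice $B_\ell$ versus $B_{\ell-n}$ is selected precisely by whether $j+\ell\le n-1$ or $j+\ell\ge n$, which yields exactly the three formulas listed in each part.

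The only real obstacle is the boundary bookkeeping: one must check that the case conditions stated in the lemma match exactly the inequalities guaranteeing both that the candidate $t$ lies in $\{0,\dots,n-1\}$ and that the resulting row $i$ lies in $\{0,\dots,n-1\}$, with no overlap and no omission among the subcases. Note that commutativity of the entries plays no role here---each entry is computed in the fixed order $A_\bullet B_\bullet$---so this is a routine, if slightly tedious, verification of the three subcases in each of parts (1) and (2).
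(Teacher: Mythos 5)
Your proposal is correct and matches the paper's approach: the paper simply states that this lemma is "proved by direct computations," and your argument is exactly that computation, with the right key observation that the two candidate summation indices from each factor differ by $n$, so at most one survives in $\{0,\dots,n-1\}$ and every entry of $C$ is a single product. The remaining boundary bookkeeping you describe does check out, so nothing essential is missing.
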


So the product of two cyclic diagonals is still a cyclic diagonal, while the product of two block Toeplitz cyclic diagonals is in general \emph{not} block Toeplitz.

 In the space $\mathcal{M}_{n,d}$  we define the projections $ \mathbf{E}_k $, $k =
0,\cdots n-1$ as follows: the $(i, j)$ entry of $ \mathbf{E}_k(A)$ is equal to the entry of $A$ if $i-j = k$
or $i-j = k-n$, and is equal to $0$ otherwise. In other words, $ \mathbf{E}_k(A)$ “selects” only
the cyclic diagonal of order $ k $ of a matrix $ A $.  It is clear that any $ \bbE_k $ leaves $ \TT_{n,d} $ invariant.

The group  $ \mathfrak{S}_{n-1} $ of permutations of  $\{1, \dots, n-1\} $ acts on $ \TT_{n,d} $ by permuting cyclic diagonals as follows. Suppose $ \sigma\in\mathfrak{S}_{n-1} $. A cyclic diagonal of order  $ k $ is characterized by the elements $ A_k $ and $ A_{k-n} $. Then  $ B=\sigma  (A) $ is defined by $ B_k=A_{\sigma^{-1}(k)} $, $ B_{k-n}=A_{\sigma^{-1}(k)-n} $. Note that the main diagonal is left unchanged by this action.

In the scalar case $ d=1 $ all maximal algebras of Toeplitz matrices $ T=(t_{i-j})_{i,j=0}^{n-1} $ are identified in~\cite{shalom}. They are characterized by the property that either there exists $ \alpha\in\bbC $ such that $ t_{i-n} =\alpha t_i$ for all $ i=1, \dots, n-1 $, or $ t_i=0 $ for all $ i\ge 1 $ (in the last case one says that $ \alpha=\infty $).  These algebras, that we will denote $ \Pi_\alpha $, appear among the generalizations of circulants in~\cite{davis}, where they are called $ k $-circulants.

Our purpose in this paper is to obtain results of the same type for block Toeplitz matrices. However, the general situation seems too hard to decide. In order to make the problem more tractable,  we will assume in the rest of the paper that the entries of the $ n\times n $ block Toeplitz matrices belong to a fixed commutative maximal algebra $ \AA $ of $ \MM_d $; that is, the block Toeplitz matrices are in $ \TT_{n,d}[\AA] $. Whenever we will speak about maximal algebras of block Toeplitz matrices, this will mean maximal algebras contained in $ \TT_{n,d}[\AA] $.

 There are many types of maximal commutative subalgebras $ \AA $ of $\mathcal{M}_{d}(\mathbb{C})$, and their classification is far from being achieved (see, for instance, \cite{brown1, brown2, brown3}). Here are some examples, to which we will refer in the sequel.
 
 \begin{examples}\label{ex:maximal algebras}
 	\begin{enumerate}
 		\item If we fix a basis in $ \mathbb{C}^d $, then the algebra of diagonal matrices $\mathcal{D}_d$ is maximal commutative.
 		
 		\item 	The generalized circulants $ \Pi_\alpha $ defined above are 
 		maximal commutative subalgebras. Two noteworthy cases are $ \Pi_0 $ (the lower triangular Toeplitz matrices) and $ \Pi_\infty $ (the upper triangular Toeplitz matrices).
 		
 		\item 
 		Fix positive integers $ p, q $, such that $p+q=d$.  Consider the family $\mathcal{O}_{p, q}$ of  matrices that have with respect to the decomposition $\mathbb{C}^{d}=\mathbb{C}^{p}\oplus\mathbb{C}^{q}$ the form
 		\[
 		\begin{pmatrix}
 		\lambda I_{p}& X\\
 		0            &\lambda I_{q}
 		\end{pmatrix}
 		\]
 		with $\lambda\in\mathbb{C}  $ and $ X$ an arbitrary $ p\times q $ matrix with complex entries.
 		It is shown in~\cite{brown1} that, for $ |p-q|\le 1 $, $ 	\mathcal{O}_{p, q} $ 
 		is  a maximal commutative algebra.
 		
 		\item 	Suppose that $M$ is a nonderogatory matrix; that is, its minimal polynomial is equal to its characteristic polynomial. Then the algebra $\mathcal{P}(M)$ generated by $M$ is   maximal commutative.
 		
 	\end{enumerate}
 	
 \end{examples}

\section{Block Toeplitz matrices with commuting entries}

We gather in the next lemma two basic facts about the product of block Toeplitz matrices that can be found in~\cite{shalom}, Proposition 2.1 and Theorem 1.8, respectively (see also~\cite{zimmerman}).

\begin{lemma}\label{le:basic condition for product toeplitz}
	\begin{itemize}
		\item[\rm(i)]  	Suppose $A=(A_{i-j})_{i,j=0}^{n-1}$
		and $B = (B_{i-j})_{i.j=0}^{n-1}$
		are two block Toeplitz matrices. Then $AB$ is a block Toeplitz matrix if and only if
		\begin{equation}\label{prod}
		A_iB_{j-n}=A_{i-n}B_j\quad\hbox{for all}\quad i,j =1,\cdots n-1.
		\end{equation}
		
		\item[\rm(ii)] $ A^k $ is block Toeplitz for all $ k\in\bbN $ if and only if 
		\begin{equation}\label{eq:powers toeplitz}
			A_iA_0^m A_{j-n}=A_{i-n}A_0^m A_j
		\end{equation}	
		for all $i, j=1,\dots, n-1 $ and $ m\in\bbN $.
	\end{itemize}

\end{lemma}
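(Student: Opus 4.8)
The plan is to treat the two parts separately: I would prove (i) by a direct entrywise computation, and then derive (ii) from (i) by induction on the exponent.

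For (i), I would start from the product formula $C_{ij}=\sum_{k=0}^{n-1}A_{i-k}B_{k-j}$ for $C=AB$, and recall that $C$ is block Toeplitz exactly when $C_{i+1,j+1}=C_{ij}$ for all $0\le i,j\le n-2$. Shifting the summation index in $C_{i+1,j+1}$ by one, almost all terms cancel against those of $C_{ij}$, leaving only the boundary difference $C_{i+1,j+1}-C_{ij}=A_{i+1}B_{-(j+1)}-A_{i+1-n}B_{n-1-j}$. Writing $p=i+1$ and $q=j+1$ (so both range over $\{1,\dots,n-1\}$) and substituting $j=n-q$ in the claimed identity, the simultaneous vanishing of all these differences becomes precisely the family of relations \eqref{prod}. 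Hence $C$ is block Toeplitz if and only if \eqref{prod} holds, which is (i).

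For (ii) I would argue the equivalence stagewise: by induction on $N\ge 2$, I claim that $A^2,\dots,A^N$ are all block Toeplitz if and only if \eqref{eq:powers toeplitz} holds for every $i,j\in\{1,\dots,n-1\}$ and every $0\le m\le N-2$; letting $N\to\infty$ then yields the statement, and both implications follow at once. In the inductive step I assume $A^{N-1}$ is block Toeplitz and apply part (i) to the product $A^{N-1}\cdot A=A^N$: this product is block Toeplitz if and only if $(A^{N-1})_i\,A_{j-n}=(A^{N-1})_{i-n}\,A_j$ for all $i,j\ge 1$, where $(A^{N-1})_\ell$ are the (now well-defined) block-Toeplitz symbols of $A^{N-1}$. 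It therefore remains to evaluate the extreme symbols. Using that $A^{N-2}$ is already Toeplitz, each lower symbol is the convolution $(A^{N-1})_i=\sum_{a=0}^{n-1}A_{i-a}(A^{N-2})_a$, and similarly for the upper symbols; iterating this convolution down to $A^1$ expresses the required identity entirely in terms of the symbols of $A$.

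The hard part is the bookkeeping in this last step: one must show that, modulo the relations already available for smaller $m$, the condition $(A^{N-1})_iA_{j-n}=(A^{N-1})_{i-n}A_j$ collapses to the single new relation $A_iA_0^{N-2}A_{j-n}=A_{i-n}A_0^{N-2}A_j$. Here the commutativity of the entries is essential: it permits the diagonal blocks $A_0$ generated along each corner-crossing path of the iterated convolution to be gathered into one power $A_0^{N-2}$, and it lets the lower and upper factors be moved to the two ends, matching the pattern of \eqref{eq:powers toeplitz}. Carrying out this collapse—keeping careful track of which index ranges produce boundary (wrap-around) terms, in the same spirit as Lemma~\ref{le:prod gen perm toeplitz}—is the only genuinely delicate computation, and the stagewise equivalence then delivers both directions of (ii) simultaneously.
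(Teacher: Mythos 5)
Your part (i) is correct and complete: the boundary difference $C_{i+1,j+1}-C_{i,j}=A_{i+1}B_{-(j+1)}-A_{i+1-n}B_{n-1-j}$ is exactly what survives the index shift, and the substitution $j=n-q$ identifies the vanishing of all these differences with \eqref{prod}. Note that the paper itself gives no proof of this lemma at all --- both parts are quoted from \cite{shalom} (Proposition 2.1 and Theorem 1.8) --- so your argument for (i) is a welcome self-contained verification.

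Part (ii) is where there is a genuine gap, and it sits exactly where you placed it. The outer induction (that $A^2,\dots,A^N$ are all block Toeplitz iff \eqref{eq:powers toeplitz} holds for $0\le m\le N-2$) is a sound scheme, and applying part (i) to $A^N=A^{N-1}\cdot A$ is the right move; but after that, everything reduces to the claim that, modulo the relations with $m\le N-3$, the condition $(A^{N-1})_iA_{j-n}=(A^{N-1})_{i-n}A_j$ collapses to the single new relation with $A_0^{N-2}$ in the middle. That collapse \emph{is} the theorem; announcing it as ``the only genuinely delicate computation'' and not performing it leaves the proof without its core. Worse, the mechanism you propose for it would undermine the result even if carried out: you assert that commutativity of the entries is essential. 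The lemma, however, is stated (and is cited from Shalom, and must hold) for an arbitrary block Toeplitz matrix $A$, with no hypothesis that the blocks $A_\ell$ commute with one another; a proof needing such commutativity establishes only a weaker statement. It would also trivialize the conclusion, since for commuting entries $A_iA_0^mA_{j-n}-A_{i-n}A_0^mA_j=A_0^m\bigl(A_iA_{j-n}-A_{i-n}A_j\bigr)$, so the whole family \eqref{eq:powers toeplitz} degenerates to its $m=0$ case.

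In fact no commutativity is needed, and the $N=3$ stage shows what the missing computation must look like. Writing $(A^2)_i=\sum_{k=0}^{n-1}A_{i-k}A_k$ and $(A^2)_{i-n}=\sum_{k=0}^{n-1}A_{-k}A_{k+i-n}$, and re-indexing the second sum according to whether $k+i-n$ is negative or not, one obtains the \emph{exact} identity
\[
(A^2)_iA_{j-n}-(A^2)_{i-n}A_j
= A_iA_0A_{j-n}-A_{i-n}A_0A_j
+\sum_{k=1}^{n-1}A_{i-k}\bigl(A_kA_{j-n}-A_{k-n}A_j\bigr)
+\sum_{k=1}^{i-1}\bigl(A_{i-k}A_{k-n}-A_{i-k-n}A_k\bigr)A_j,
\]
valid with no assumptions whatsoever: the two sums are left and right multiples of $m=0$ discrepancies, so modulo the $m=0$ relations the left-hand side equals $A_iA_0A_{j-n}-A_{i-n}A_0A_j$, giving both directions of the equivalence at this stage without ever moving one factor past another. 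What your proposal is missing is precisely this identity for general $N$ (expressing $(A^{N-1})_iA_{j-n}-(A^{N-1})_{i-n}A_j$ as the $m=N-2$ discrepancy plus a combination of discrepancies with $m\le N-3$), proved with only those lower-order relations as input and without any appeal to commutativity.
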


Condition~\eqref{prod} states a relation between the cyclic diagonals of order $ i $ and $ j $ of $ A $ and $ B $, respectively. If two cyclic diagonals satisfy~\eqref{prod}, we will say that they are \emph{compatible}. Thus Lemma~\ref{le:basic condition for product toeplitz} may be restated as saying that $ AB $ is Toeplitz if and only if any cyclic diagonal of nonzero order of $ A $ is compatible with any cyclic diagonal of nonzero order of $ B $. In particular, the condition does not depend on the main diagonals of the two matrices. 

If $ A,B $ are two block Toeplitz matrices, we will use the notation $ A\toep B $ to indicate that the product $ AB $ is also a block Toeplitz matrix. If $ \SS $ is a set of block Toeplitz matrices, then we write $ A\toep\SS $ if $ A\toep B $ for any $ B\in\SS $.

The following corollary gathers some consequences of Lemma~\ref{le:basic condition for product toeplitz}.

\begin{corollary}\label{co:product}
	Suppose $A$ and $B$ are block Toeplitz matrices with $ A\toep B $.
	\begin{itemize}
		\item[\rm(i)] If  $ \sigma, \tau\in \mathfrak{S}_{n-1} $, then $ \sigma(A) \toep \tau(B) $.
		
		\item[\rm(ii)] If the entries of $A$ commute with the entries of $B$, then  $AB = BA$.
		
		\item[\rm(iii)] If $\mathcal{B}$ is an algebra of block Toeplitz matrices with entries in a
		commutative algebra $\mathcal{A}$, then $\mathcal{B}$ is commutative. 
		
			\item[\rm(iv)] $ A\toep \DD_{n,d} $. 
	\end{itemize}
	\end{corollary}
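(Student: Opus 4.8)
The plan is to deduce everything from the reformulation, noted right after Lemma~\ref{le:basic condition for product toeplitz}, that $A\toep B$ holds exactly when every cyclic diagonal of nonzero order of $A$ is compatible with every cyclic diagonal of nonzero order of $B$, where compatibility of the order-$i$ diagonal of $A$ with the order-$j$ diagonal of $B$ is the single identity $A_iB_{j-n}=A_{i-n}B_j$. The bulk of the work will be in part~(ii); the remaining parts follow quickly.

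For part~(i) I would observe that the action of $\sigma$ only relabels the nonzero-order cyclic diagonals: by definition the order-$i$ diagonal of $\sigma(A)$ is the order-$\sigma^{-1}(i)$ diagonal of $A$, and similarly for $\tau(B)$. Hence verifying compatibility of the pair $(i,j)$ for $\sigma(A),\tau(B)$ is identical to verifying it for the pair $(\sigma^{-1}(i),\tau^{-1}(j))$ for $A,B$; since $\sigma^{-1},\tau^{-1}$ are bijections of $\{1,\dots,n-1\}$ and $A\toep B$ already guarantees compatibility of all such pairs, we obtain $\sigma(A)\toep\tau(B)$.

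Part~(ii) is the heart of the matter. I would first record that commutativity makes the compatibility identity symmetric: applying $A_iB_{j-n}=A_{i-n}B_j$ with $i,j$ interchanged and then commuting the two factors yields $B_iA_{j-n}=B_{i-n}A_j$, i.e.\ $B\toep A$, so both $AB$ and $BA$ are block Toeplitz. To see they coincide I would compare entries. Writing $(AB)_{ik}=\sum_{l=0}^{n-1}A_{i-l}B_{l-k}$ and, after commuting the factors, $(BA)_{ik}=\sum_{l=0}^{n-1}A_{l-k}B_{i-l}$, the substitution $l\mapsto i+k-l$ rewrites the latter as the same summand $A_{i-l}B_{l-k}$ summed over the shifted window $i+k-n+1\le l\le i+k$. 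Thus $(AB)_{ik}$ and $(BA)_{ik}$ are one and the same expression summed over two windows of $n$ consecutive integers, and their difference is a signed sum over the symmetric difference of the windows. On that symmetric difference the index $l$ in one window and the index $l\mp n$ in the other correspond under precisely the compatibility relation $A_aB_{b-n}=A_{a-n}B_b$ with $a,b\in\{1,\dots,n-1\}$, so the leftover contributions cancel in pairs and $(AB)_{ik}=(BA)_{ik}$ for all $i,k$. Splitting on whether $i+k\le n-1$ or $i+k\ge n$ keeps the window bookkeeping manageable; alternatively one may decompose $A=\sum_k\bbE_k(A)$ and $B=\sum_\ell\bbE_\ell(B)$ and use Lemma~\ref{le:prod gen perm toeplitz} to check that $\bbE_k(A)\bbE_\ell(B)$ and $\bbE_\ell(B)\bbE_k(A)$ agree entrywise, again by commutativity together with compatibility.

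Parts~(iii) and~(iv) are then short. For~(iii), any $A,B$ in an algebra $\BB\subset\TT_{n,d}[\AA]$ have product $AB\in\BB$ block Toeplitz, so $A\toep B$, while their entries lie in the commutative algebra $\AA$ and hence commute; part~(ii) gives $AB=BA$, so $\BB$ is commutative. For~(iv), a diagonal block Toeplitz matrix $B$ has $B_k=0$ for every $k\ne0$, so in particular $B_j=B_{j-n}=0$ for $j=1,\dots,n-1$ and the compatibility identity $A_iB_{j-n}=A_{i-n}B_j$ holds trivially for all $i,j$, giving $A\toep\DD_{n,d}$. I expect the only genuine obstacle to be the entry bookkeeping in part~(ii); the two reformulations above are meant precisely to reduce it to a routine, if slightly tedious, verification.
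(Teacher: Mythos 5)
Your proposal is correct and takes essentially the same route as the paper: part (ii) is proved by the identical entrywise comparison (commute the factors, reindex the sum by $l\mapsto i+k-l$, and cancel the leftover boundary terms in pairs using the compatibility relation~\eqref{prod}), while (i), (iii), and (iv) are dispatched exactly as in the paper's proof. The extra observation that $B\toep A$ and the alternative $\bbE_k$-decomposition route are harmless asides, not needed for the argument.
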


\begin{proof} (i) If any cyclic diagonal of $ A $ is compatible with any cyclic diagonal of $ B $, then the same is true about $ \sigma(A) $ and $ \tau(B) $.

(ii)	 Fix $i,j$. If $C=AB$, $C^\prime=BA$, we have then 
	\begin{align*}
	C_{i,j}
	&= \sum_{k=0}^{n-1}A_{i-k}B_{k-j}\\
	C_{i,j}^\prime
	&= \sum_{k=0}^{n-1}B_{i-k}A_{k-j}
	\end{align*}
	Applying commutativity of entries and rewriting the sum by denoting $t=i+j-k$, we obtain 
	\[
	C_{i.j}^\prime= \sum_{k=0}^{n-1}A_{k-j}B_{i-k}=\sum_{t=i+j-(n-1)}^{i+j}A_{i-t}B_{t-j}
	\]
	If $i+j=n-1$ then the last sum coincides exactly with the formula for $C_i,j$. So $C_{i,j}^\prime=C_{i,j}$. Let us then suppose, for instance, that $i+j<n-1$. Then only part of the sum is the same, and rest yields
	\begin{align*}
	C_{i,j}-C_{i,j}^\prime
	&= \sum_{k=i+j+1}^{n-1}A_{i-k}B_{k-j}- \sum_{i+j-(n-1)}^{-1}A_{i-k}B_{k-j}\\
	&=\sum_{k=i+j+1}^{n-1}(A_{i-k}B_{k-j}-A_{i-k+n}B_{k-j-n})
	\end{align*}
	By Lemma \ref{le:basic condition for product toeplitz} (i) each of the terms in the sum is zero, and so $C_{i,j}-C_{i,j}^\prime=0$. A similar argument works for $i+j>n-1$.
	
	(iii) is an immediate consequence of (ii).
	
	(iv) follows from Lemma~\ref{le:basic condition for product toeplitz} (i).
\end{proof}

In the scalar case $ \AA=\bbC $, Corollary~\ref{co:product} (iii) recaptures the commutativity of the generalized circulants $ \Pi_\alpha $.

\section{General algebras of block Toeplitz matrices}\label{se:diagonals}

We start with the following simple observation.
\begin{lemma}\label{l1}
	If $ A,B $ are  block Toeplitz matrices such that   $A\toep B$, then $ \mathbf{E}_i(A)\toep \mathbf{E}_j(B)$ for any $i,j = 0,\dots, n-1$.
\end{lemma}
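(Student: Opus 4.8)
The plan is to reduce the assertion to the pointwise compatibility condition~\eqref{prod} supplied by Lemma~\ref{le:basic condition for product toeplitz}(i), exploiting the fact that each $\mathbf{E}_i(A)$ carries at most one cyclic diagonal of nonzero order. I would first dispose of the degenerate cases $i=0$ or $j=0$: in either case one of the two factors, say $\mathbf{E}_0(A)$, is a block diagonal Toeplitz matrix lying in $\DD_{n,d}$, and so has no cyclic diagonal of nonzero order. Consequently every instance of the displayed identity in~\eqref{prod} for the pair $(\mathbf{E}_i(A),\mathbf{E}_j(B))$ has both sides equal to zero, and the condition holds vacuously; hence the product is Toeplitz. (For $j=0$ this also follows at once from Corollary~\ref{co:product}(iv).)

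For the main case $i,j\ge 1$, I would first record that $\mathbf{E}_i(A)$ is itself block Toeplitz, since $\mathbf{E}_i$ leaves $\TT_{n,d}$ invariant, and that its only cyclic diagonal of nonzero order is the one of order $i$, determined by the entries $A_i$ and $A_{i-n}$; likewise $\mathbf{E}_j(B)$ carries only the order-$j$ diagonal, determined by $B_j$ and $B_{j-n}$. Applying Lemma~\ref{le:basic condition for product toeplitz}(i) to the pair $(\mathbf{E}_i(A),\mathbf{E}_j(B))$, with dummy indices $p,q$ ranging over $1,\dots,n-1$, the product is Toeplitz precisely when $(\mathbf{E}_i(A))_p\,(\mathbf{E}_j(B))_{q-n}=(\mathbf{E}_i(A))_{p-n}\,(\mathbf{E}_j(B))_q$ for all such $p,q$.

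The key observation is then a bookkeeping one: by the selecting property of the projections, $(\mathbf{E}_i(A))_p$ and $(\mathbf{E}_i(A))_{p-n}$ both vanish unless $p=i$, and $(\mathbf{E}_j(B))_q$, $(\mathbf{E}_j(B))_{q-n}$ both vanish unless $q=j$. Hence both sides of the displayed identity are zero except in the single instance $p=i$, $q=j$, where it reads $A_iB_{j-n}=A_{i-n}B_j$. But that is exactly the $(i,j)$-instance of~\eqref{prod}, which holds because $A\toep B$. Therefore $\mathbf{E}_i(A)\toep\mathbf{E}_j(B)$. In the language introduced after Lemma~\ref{le:basic condition for product toeplitz}, the statement simply says that compatibility of individual cyclic diagonals is inherited from $A\toep B$: since $\mathbf{E}_i(A)$ and $\mathbf{E}_j(B)$ each carry a single cyclic diagonal, their compatibility is just one of the relations already guaranteed by the hypothesis.

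There is no substantial obstacle here; the only point requiring care is the index bookkeeping that confirms the vanishing of all the ``cross'' instances $p\ne i$ or $q\ne j$, so that the compatibility of $\mathbf{E}_i(A)$ with $\mathbf{E}_j(B)$ genuinely reduces to the single relation coming from the order-$i$ and order-$j$ diagonals, together with the separate (but trivial) treatment of the main-diagonal cases $i=0$ or $j=0$.
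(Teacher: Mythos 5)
Your proof is correct and follows the same route as the paper's: both reduce the claim to the compatibility condition~\eqref{prod} of Lemma~\ref{le:basic condition for product toeplitz}(i) and observe that the only possibly nonzero instance for the pair $(\mathbf{E}_i(A),\mathbf{E}_j(B))$ is the one already guaranteed by $A\toep B$. The paper states this in one line as ``obvious''; you have simply written out the index bookkeeping (and the vacuous cases $i=0$ or $j=0$) explicitly, which is fine.
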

\begin{proof}
	The proof follows immediately from Lemma~\ref{le:basic condition for product toeplitz} (i). If any cyclic diagonal of nonzero order of $ A $ is compatible with any cyclic diagonal of nonzero order of $ B $, it is obvious that the same is true for  $ \mathbf{E}_i(A)$ and $ \mathbf{E}_j(B)$. 
\end{proof}

\begin{lemma}\label{le:cyclic diagonal power}
	Suppose $ D$ is a block Toeplitz cyclic diagonal with commuting entries. Then $ D^k $ are also block Toeplitz cyclic diagonals for any $ k\in\bbN $.
\end{lemma}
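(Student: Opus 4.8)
The plan is to establish the two defining properties of $D^m$ separately: that it is a cyclic diagonal, and that it is block Toeplitz. The first is purely combinatorial and follows from Lemma~\ref{le:prod gen perm}; the second is the substantive part, and I would deduce it from the power criterion in Lemma~\ref{le:basic condition for product toeplitz}(ii), which is exactly where the commutativity hypothesis will enter.

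For the cyclic-diagonal part I would argue by induction on $m$. Assuming $D^{m}\in\CD_{k'}$ for some order $k'$, Lemma~\ref{le:prod gen perm} applied to $D^m\in\CD_{k'}$ and $D\in\CD_k$ shows that $D^{m+1}=D^m D$ again lies in a single $\CD_{k''}$, with $k''=k'+k$ or $k''=k'+k-n$ according to the two cases of that lemma. Thus every power $D^m$ is a cyclic diagonal of some order. This step requires no commutativity; it is simply a bookkeeping of which diagonals can be nonzero.

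The block Toeplitz part is where I expect the real content. Here I would invoke Lemma~\ref{le:basic condition for product toeplitz}(ii), which guarantees that all powers of a block Toeplitz $A$ are block Toeplitz provided~\eqref{eq:powers toeplitz} holds for all $i,j\in\{1,\dots,n-1\}$ and all $m\in\bbN$. The key simplification is that a cyclic diagonal of order $k\ge 1$ has vanishing main diagonal, so $D_0=0$. Consequently, for $m\ge 1$ both sides of~\eqref{eq:powers toeplitz} carry the factor $D_0^m=0$ and the condition is trivially satisfied; only the case $m=0$ survives, where the requirement reduces to $D_iD_{j-n}=D_{i-n}D_j$. But for $i,j\in\{1,\dots,n-1\}$ the only nonzero block entries of $D$ are $D_k$ (at $i=k$) and $D_{k-n}$ (at $j=k$), so every term in this identity vanishes unless $i=j=k$, in which case it reads $D_kD_{k-n}=D_{k-n}D_k$ --- precisely the commutativity of the entries of $D$. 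Hence the hypothesis of Lemma~\ref{le:basic condition for product toeplitz}(ii) is met, all powers $D^m$ are block Toeplitz, and combined with the previous paragraph this gives $D^m\in\CD^\TT_{k''}$. The degenerate case $k=0$, where $D$ is diagonal block Toeplitz with constant block $D_0$, is immediate since its powers are diagonal with block $D_0^m$. I do not anticipate a genuine obstacle; the only point demanding care is recognizing that the apparently complicated relation~\eqref{eq:powers toeplitz} collapses, thanks to $D_0=0$, to the single commutation $D_kD_{k-n}=D_{k-n}D_k$.
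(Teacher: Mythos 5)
Your proof is correct and follows essentially the same route as the paper's: both reduce the cyclic-diagonal part to Lemma~\ref{le:prod gen perm} and the block Toeplitz part to condition~\eqref{eq:powers toeplitz} of Lemma~\ref{le:basic condition for product toeplitz}(ii), observing that $D_0=0$ kills all cases except $m=0$, $i=j=k$, where the condition is exactly the commutation $D_kD_{k-n}=D_{k-n}D_k$. The only cosmetic difference is that you dispose of the trivial case $k=0$ at the end rather than at the outset.
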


\begin{proof} Suppose $ D\in  \CD_p^\TT $. The result is immediate for $ p=0 $, so we shall assume that $ p\ge 1 $.
The fact that $ D^k $ is a cyclic diagonal follows from Lemma~\ref{le:prod gen perm}. To prove that it is block Toeplitz, we check conditions~\eqref{eq:powers toeplitz} in Lemma~\ref{le:basic condition for product toeplitz} (ii). If $ m\ge0 $ we have $ D_0^m=0 $, and so both sides are~0. For $ m=0 $ both sides of~\eqref{eq:powers toeplitz} are again 0, except when $ i=j=p $. Then the equality follows from the commutation of $ D_p $ and~$ D_{p-n} $.
\end{proof}

\begin{lemma}\label{le:product of cyclic diagonals toeplitz}
	Suppose $ E,F $ are block Toeplitz cyclic diagonals, and the entries of $ E $ commute with those of $ F $. If $ A $ is a block Toeplitz matrix such that $ A\toep E $ and $ A\toep F $, then $ A\toep (EF) $.
\end{lemma}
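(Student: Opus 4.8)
The plan is to read the conclusion as the assertion that the product $A(EF)$ is block Toeplitz, and to reach it by associativity. Since $A\toep E$, the matrix $C:=AE$ is block Toeplitz, and $A(EF)=(AE)F=CF$; so it suffices to prove $C\toep F$. As $F\in\CD_q^\TT$ is a cyclic diagonal of order $q$, its only nonzero diagonals are $F_q$ and $F_{q-n}$, so Lemma~\ref{le:basic condition for product toeplitz}(i) applied to the pair $(C,F)$ collapses to the single family of compatibility relations
\[
C_iF_{q-n}=C_{i-n}F_q\qquad(i=1,\dots,n-1),
\]
and this is all that has to be verified.

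To make the left-hand side explicit I would compute the diagonals of $C=AE$ from those of $A$ and $E$. Because $E\in\CD_p^\TT$ contributes only the entries $E_p$ and $E_{p-n}$, Lemma~\ref{le:prod gen perm toeplitz} (applied to the cyclic diagonals $\bbE_s(A)$ and $E$) gives $C_i=A_{i-p}E_p$, together with a wrapped companion of the form $A_{i-p-n}E_p$ or $A_{i-p}E_{p-n}$, according to the sign of $i-p$. Away from the aligned index $i=p$ the required identity then reduces to an application of $A\toep F$ after using commutativity to move $E_p$ through the entries of $F$; the model computation is
\[
A_{i-p}E_pF_{q-n}=A_{i-p}F_{q-n}E_p=A_{i-p-n}F_qE_p=A_{i-p-n}E_pF_q,
\]
where the middle step is exactly the relation $A_{i-p}F_{q-n}=A_{i-p-n}F_q$ furnished by $A\toep F$ (legitimate because the shifted index $i-p$ lies in $\{1,\dots,n-1\}$), and the outer steps are the commutation of $E_p$ with the $F$-entries. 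The cases with $i<p$ are handled in the same spirit, now invoking $A\toep E$ as well in order to convert the $E_{p-n}$-terms before applying $A\toep F$.

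I expect the real difficulty to be concentrated in the wraparound and alignment bookkeeping: one must split according to whether $p+q\le n-1$ or $p+q\ge n$ (the two regimes of Lemma~\ref{le:prod gen perm toeplitz}) and keep track of which of $E_p,E_{p-n},F_q,F_{q-n}$ survives in each diagonal, checking that every instance collapses to one of the two compatibility relations after a commutation. The genuinely delicate case is the aligned one $i=p$, where $C_p=A_0E_p$ and $C_{p-n}=A_0E_{p-n}$ both carry the main diagonal $A_0$; here $A\toep E$ and $A\toep F$ give no leverage, and one is reduced to proving $A_0(E_pF_{q-n}-E_{p-n}F_q)=0$. This amounts to showing that $EF$ has a well-defined wrapped diagonal, that is, that $EF$ is itself block Toeplitz; reconciling the two expressions $E_pF_{q-n}$ and $E_{p-n}F_q$ is the crux of the argument, and is precisely the point at which the commutativity of the entries of $E$ and $F$ must be brought to bear.
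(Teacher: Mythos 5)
There is a genuine gap, and it sits exactly where you located the ``crux'': the aligned case $i=p$ cannot be closed, by your route or any other, under your reading of the statement. You reduce that case to $A_0\bigl(E_pF_{q-n}-E_{p-n}F_q\bigr)=0$ and assert that the commutativity of the entries of $E$ and $F$ reconciles the two products, i.e.\ that $EF$ is block Toeplitz. This is false: commutativity gives no such thing. The paper warns right after Lemma~\ref{le:prod gen perm toeplitz} that the product of two block Toeplitz cyclic diagonals is in general \emph{not} block Toeplitz, and commutativity is automatic already in the scalar case, where $E_p=1$, $E_{p-n}=0$, $F_q=F_{q-n}=1$ gives $E_pF_{q-n}=1\neq 0=E_{p-n}F_q$. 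Worse, with your interpretation of the conclusion (``$A(EF)$ is block Toeplitz'') the lemma itself is false: take $A=I$, so that $A\toep E$ and $A\toep F$ hold trivially; your reading would then force $EF$ to be block Toeplitz for \emph{every} pair of commuting block Toeplitz cyclic diagonals, and the same example (in block form: $E_p=\mathrm{diag}(1,0,\dots,0)$, $E_{p-n}=0$, $F_q=F_{q-n}=I_d$) contradicts this, since $A(EF)=EF$ then has a wrapped diagonal taking both the value $0$ and a nonzero value.

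The paper's own proof establishes a weaker, entrywise statement, and that is the operative meaning of $A\toep(EF)$ there: it never forms $AE$, hence never couples $A_0$ simultaneously to $E_p$ and $E_{p-n}$, which is what manufactured your impossible case. Instead, it checks the relations~\eqref{prod} between $A$ and $C=EF$ entry by entry: by Lemma~\ref{le:prod gen perm toeplitz} each nonzero entry of the wrapped diagonal of $C$ equals \emph{either} $E_{p-n}F_q$ \emph{or} $E_pF_{q-n}$, and for each value separately the identity $A_iX=A_{i-n}E_pF_q$ holds --- for $X=E_{p-n}F_q$ from~\eqref{prod} for $(A,E)$, and for $X=E_pF_{q-n}$ from~\eqref{prod} for $(A,F)$ together with commutativity. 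The two values are never compared, so the argument is valid even when $EF$ fails to be block Toeplitz; and this entrywise version suffices downstream, because in Corollary~\ref{co:adding a cyclic diagonal} the product $EF=D^k$ is known to be block Toeplitz by the separate Lemma~\ref{le:cyclic diagonal power}. Note, finally, that your associativity reduction does yield a clean and correct proof if one adds the hypothesis $E_pF_{q-n}=E_{p-n}F_q$ (i.e.\ $E\toep F$), which is exactly what the application provides: the aligned case is then immediate and the off-aligned cases are as you computed. The fatal step is solely the claim that commutativity supplies that hypothesis for free.
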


\begin{proof} Formulas~\eqref{prod} are true for the pairs $ A,E $ and for $ A,F $. In order to show that $ A\toep (EF) $, we have to check them 
 for $ A $ and $ C=EF $. One has to consider different cases, according to the formulas in Lemma~\ref{le:prod gen perm toeplitz}. Here is one case (we leave the remaining to the reader). Suppose $ E\in \CD_k^\TT $, $ F\in\CD_\ell^\TT $, and $ k+\ell\le n-1 $. We have then $ C\in \CD_{k+\ell}^\TT $, and 
	\[
	\begin{split}
	C_{k+\ell}&=E_kF_\ell\\
	C_{k+\ell-n}&= \text{either }E_{k-n}F_\ell\text{ or } E_kF_{\ell-n}.
	\end{split}
	\]
	Since $ C $ is a cyclic diagonal of order $ k+\ell $, the only relations~\eqref{prod} that we have to check are
	\[
	A_iC_{k+\ell-n}= A_{i-n}C_{k+\ell}
	\]
	for $ i=1, \dots, n-1 $.
	This becomes in the first case
	\[
	A_iE_{k-n}F_\ell=A_{i-n}E_kF_\ell
	\]
	and in the second
	\[
	A_iE_{k}F_{\ell-n}=A_{i-n}E_kF_\ell.
	\]
	The first follows from~\eqref{prod} for $ A, E$, while the second follows from~\eqref{prod} for $ A, F$ (since by hypothesis $E_{k}F_{\ell-n}=F_{\ell-n} E_{k} $ and $ E_kF_\ell= F_\ell E_k $).
	\end{proof}

\begin{corollary}\label{co:adding a cyclic diagonal}
	Let $ \AA $ be a maximal commutative algebra of $ d\times d $ matrices. Suppose $ \BB $ is a commutative algebra of block Toeplitz matrices with entries in  $ \AA $, and $ D $ is a block Toeplitz cyclic diagonal with entries in $ \AA $, such that $ D\toep\BB $. Then algebra $ \BB_1 $ generated by $ \BB $ and $ D $ is formed by block Toeplitz matrices.
\end{corollary}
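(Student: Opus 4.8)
The plan is to exploit that the block Toeplitz matrices form a linear subspace of $\MM_{n,d}$, so that it suffices to show every monomial in the generators of $\BB_1$ is block Toeplitz. First I would record that $\BB_1$ is commutative. Indeed, its generators are the elements of $\BB$, which commute among themselves by hypothesis, together with $D$; and $D$ commutes with every $B\in\BB$, because $D\toep B$ together with the fact that the entries of $D$ and of $B$ all lie in the commutative algebra $\AA$ gives $DB=BD$ by Corollary~\ref{co:product}(ii). Since the generators pairwise commute, any product of generators can be reordered, grouping all factors from $\BB$ (whose product again lies in $\BB$, as $\BB$ is an algebra) and all copies of $D$. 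Hence every such monomial has the form $D^{k}$ or $BD^{k}$ with $B\in\BB$ and $k\ge 0$, and an arbitrary element of $\BB_1$ is a linear combination of matrices of these two shapes.

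It then remains to check that each shape is block Toeplitz. For $k=0$ we are in $\BB$, which consists of block Toeplitz matrices by hypothesis. For the pure powers $D^{k}$, Lemma~\ref{le:cyclic diagonal power} shows directly that they are block Toeplitz cyclic diagonals, since the entries of $D$ lie in the commutative algebra $\AA$ and therefore commute. The substantial case is the mixed term $BD^{k}$ with $k\ge 1$, which is block Toeplitz precisely when $B\toep D^{k}$. I would prove $B\toep D^{k}$ for all $k\ge 1$ by induction on $k$. The base case $B\toep D$ is the hypothesis $D\toep\BB$; note that $\toep$ is symmetric here, for $DB$ being block Toeplitz together with $DB=BD$ forces $BD$ to be block Toeplitz as well. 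For the inductive step, assume $B\toep D^{k-1}$. Both $D^{k-1}$ and $D$ are block Toeplitz cyclic diagonals (Lemma~\ref{le:cyclic diagonal power}) whose entries all lie in $\AA$ and hence commute, and $B$ satisfies $B\toep D^{k-1}$ and $B\toep D$; so Lemma~\ref{le:product of cyclic diagonals toeplitz}, applied with $E=D^{k-1}$, $F=D$, and $A=B$, yields $B\toep(D^{k-1}D)=B\toep D^{k}$, completing the induction.

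Putting these together, every monomial in the generators of $\BB_1$ is block Toeplitz, and since the block Toeplitz matrices form a linear space, so is every element of $\BB_1$. The delicate point that the argument turns on is the reduction to monomials of the form $BD^{k}$, which relies on the full commutativity established at the outset; without it one could not collapse an arbitrary product of generators into a single $\BB$-factor times a power of $D$. I expect the main obstacle to be bookkeeping rather than conceptual: one must ensure that at each stage the commutativity hypotheses of Lemma~\ref{le:product of cyclic diagonals toeplitz} genuinely hold, and this is exactly where the closure of $\AA$ under sums and products is used, guaranteeing that the entries of each power $D^{k-1}$ remain in $\AA$ and thus commute with those of $D$ and of $B$.
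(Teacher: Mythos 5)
Your proof is correct and follows essentially the same route as the paper's: powers $D^k$ are block Toeplitz cyclic diagonals by Lemma~\ref{le:cyclic diagonal power}, an induction via Lemma~\ref{le:product of cyclic diagonals toeplitz} gives $B\toep D^k$, commutativity from Corollary~\ref{co:product}(ii) reduces everything to monomials $BD^k$, and linearity finishes. Your write-up is merely more explicit than the paper's about the bookkeeping (symmetry of $\toep$ under commuting entries, and the entries of $D^k$ remaining in $\AA$).
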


\begin{proof}  Lemma~\ref{le:cyclic diagonal power} implies that $ D^k $ is a block Toeplitz cyclic diagonal. Using recurrence, it follows from Lemma~\ref{le:product of cyclic diagonals toeplitz} that $ D^k\toep \BB $ for all $ k $, and therefore  $ D^k $ is also in the commutant of $ \BB $ by Corollary~\ref{co:product} (ii).
The algebra $ \BB_1 $ is then formed by polynomials in $ D $ with coefficients in $ \BB $, and any monomial of the type $ BD^k $ is block Toeplitz.  It follows that $ \BB_1 $ is formed by block Toeplitz matrices.
\end{proof}

The next theorem is our main result about general maximal algebras contained in  $ \TT_{n,d}[\AA] $, with $ \AA $  a maximal commutative algebra of $ d\times d  $ matrices.

\begin{theorem}\label{th:general facts about maximal algebras}
	Suppose $\mathcal{B}$ is a maximal subalgebra of $ \TT_{n,d}[\AA] $.
	\begin{itemize}
		\item[\rm(i)]  $\DD_{n,d}[\AA]\subset \BB $.

		\item[\rm (ii)]  If $A\in\mathcal{B}$, then $ \mathbf{E}_k (A)\in\mathcal{B}$ for all $0\leq k\leq n-1$.
		
		\item[(iii)] $ \BB $ coincides with the vector space generated by $ \E_k(\BB) $, for $ k=0,\dots, n-1 $.
				
					\item[\rm (iv)]  $ \BB $ is invariant with respect to the action of  $ \mathfrak{S}_{n-1} $.
	
		\item [\rm (v)] $ \mathbf{E}_k(\mathcal{B})\not=\{0\} $ for any $ k=0,\dots, n-1 $.

	\end{itemize}
	
\end{theorem}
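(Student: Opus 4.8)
My plan is to run parts (i), (ii) and (iv) through one common device. To show that a given block Toeplitz cyclic diagonal $D$ with entries in $\AA$ lies in $\BB$, I will verify that $D\toep\BB$ and then invoke Corollary~\ref{co:adding a cyclic diagonal}: since $\BB$ is commutative by Corollary~\ref{co:product}~(iii), that corollary makes the algebra $\BB_1$ generated by $\BB$ and $D$ consist of block Toeplitz matrices, and because $\AA$ is an algebra the entries of $\BB_1$ remain in $\AA$. Thus $\BB\subseteq\BB_1\subseteq\TT_{n,d}[\AA]$, and maximality of $\BB$ forces $\BB_1=\BB$, that is, $D\in\BB$.

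For (i) I take any $D\in\DD_{n,d}[\AA]$, a cyclic diagonal of order $0$; every diagonal block Toeplitz matrix is compatible with every block Toeplitz matrix (Corollary~\ref{co:product}~(iv)), so $D\toep\BB$ and the device gives $D\in\BB$. For (ii) I use that $\E_k$ preserves block Toeplitz matrices, so $\E_k(A)\in\CD_k^\TT$ has entries in $\AA$ whenever $A\in\BB$. Given $B\in\BB$ we have $A\toep B$, so Lemma~\ref{l1} yields $\E_k(A)\toep\E_j(B)$ for every $j$; since by the restatement of Lemma~\ref{le:basic condition for product toeplitz} compatibility with $B$ amounts to compatibility with each cyclic diagonal of $B$, this gives $\E_k(A)\toep\BB$ and hence $\E_k(A)\in\BB$. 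Statement (iii) is then immediate: $\E_k(\BB)\subseteq\BB$ by (ii), while conversely every $A\in\BB$ equals $\sum_{k=0}^{n-1}\E_k(A)$ with each summand in $\BB$.

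For (iv) it suffices, by (ii), (iii) and the fact that $\BB$ is a vector space, to show that $\sigma$ carries a single cyclic diagonal $D\in\BB\cap\CD_\ell^\TT$ back into $\BB$. Here $D\toep\BB$, and Corollary~\ref{co:product}~(i), applied with $\tau=\mathrm{id}$, upgrades each relation $D\toep B$ to $\sigma(D)\toep B$; since $\sigma(D)\in\CD_{\sigma(\ell)}^\TT$ still has entries in $\AA$, the device gives $\sigma(D)\in\BB$. Writing $\sigma(A)=\E_0(A)+\sum_{\ell\ge1}\sigma(\E_\ell(A))$ (the main diagonal being fixed by $\sigma$) then places $\sigma(A)$ in $\BB$.

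The substantive point is (v), which I would prove by contradiction. The order $k=0$ is clear, since (i) puts the identity into $\BB$ and $\E_0(I)=I\neq0$. Suppose $\E_k(\BB)=\{0\}$ for some $k\ge1$. Using the $\mathfrak{S}_{n-1}$-invariance from (iv) and the transitivity of $\mathfrak{S}_{n-1}$ on $\{1,\dots,n-1\}$ — concretely, choosing $\sigma$ with $\sigma(\ell)=k$, the vanishing of $\E_k(\sigma(A))$ forces $A_\ell=A_{\ell-n}=0$ — I obtain $\E_\ell(\BB)=\{0\}$ for all $\ell\ge1$, whence by (iii) together with (i) we would have $\BB=\DD_{n,d}[\AA]$. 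But $\DD_{n,d}[\AA]$ is not maximal: a cyclic diagonal $D$ of some order $\ell\ge1$ with a single nonzero entry in $\AA$ satisfies $D\toep\DD_{n,d}[\AA]$ by Corollary~\ref{co:product}~(iv), so by Corollary~\ref{co:adding a cyclic diagonal} the algebra generated by $\DD_{n,d}[\AA]$ and $D$ is a strictly larger subalgebra of $\TT_{n,d}[\AA]$, contradicting maximality. I expect the only delicate bookkeeping to lie in tracking how the $\mathfrak{S}_{n-1}$-action relabels the pair of entries $(A_\ell,A_{\ell-n})$ of a cyclic diagonal, which underlies both (iv) and the propagation step in (v).
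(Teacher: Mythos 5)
Your proposal is correct and follows essentially the same route as the paper: the same device (verify $D\toep\BB$, invoke Corollary~\ref{co:adding a cyclic diagonal}, and conclude by maximality) drives (i), (ii) and (iv), with (iii) as a formal consequence and (v) reduced to the non-maximality of $\DD_{n,d}[\AA]$ plus the $\mathfrak{S}_{n-1}$-invariance. The only cosmetic difference is that you phrase (v) as a contradiction where the paper argues directly, and you spell out the $k=0$ case and the relabelling of $(A_\ell,A_{\ell-n})$, which the paper leaves implicit.
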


\begin{proof} 
	
	(i) 
	If $ D $ is a diagonal block Toeplitz matrix, it follows from Corollary~\ref{co:product} (iv) that $ D\toep \BB $. If $ D\notin\BB $, we may apply Corollary~\ref{co:adding a cyclic diagonal} and obtain the algebra $ \BB_1\supset\BB $ composed of block Toeplitz matrices. But maximality implies $ \BB_1=\BB $, which proves~(i).

Suppose $A\in\mathcal{B}$, and fix $ k $. It is immediate from~\eqref{prod} that $ \mathbf{E}_k (A)\toep\mathcal{B}$; so Corollary~\ref{co:adding a cyclic diagonal} and maximality of $ \BB $ imply that $ \mathbf{E}_k (A)\in\mathcal{B}$. This proves (ii); then (iii) is an immediate consequence of (ii).

	To prove (iv), it is enough, by (iii), to show that if $ D $ is a cyclic diagonal in $ \BB $ and  $ \sigma\in \mathfrak{S}_{n-1} $, then $ \sigma(D)\in \BB $. 
	For any $ A\in\BB $ we have $ D\toep A $; therefore, applying  Corollary~\ref{co:product} (i)  to $ \sigma $ and $ \tau= $identity, it follows that $ \sigma(D)\toep A $. So $  \sigma(D)\toep \BB$, and we may apply Corollary~\ref{co:adding a cyclic diagonal} to $ \sigma(D) $ and $ \BB $. The algebra $ \BB_1 $ obtained contains $ \sigma(A) $ and must be equal to $ \BB $ by maximality.

	For (v), let us first note that $ \BB $ cannot be equal to $\DD_{n,d}[\AA]$; indeed, the algebra generated by $\DD_{n,d}[\AA]$ and any cyclic diagonal is formed by block Toeplitz matrices by Corollary~\ref{co:adding a cyclic diagonal}  and it contains $\DD_{n,d}[\AA]$ strictly. Therefore	
	 $ \mathbf{E}_k(\mathcal{B})\not=\{0\} $ for at least one value of $ k \ge 1 $. Applying (iv) it follows that 
	 $ \mathbf{E}_k(\mathcal{B})\not=\{0\} $ for all $ k\ge 1 $.
\end{proof}

The main part of Theorem~\ref{th:general facts about maximal algebras} can be summarized as follows. First, a maximal subalgebra of $ \TT_{n,d}[\AA] $  is generated as a linear subspace by the cyclic diagonals it contains. Secondly, a cyclic diagonal being characterized by two matrices $ A_k $ and $ A_{k-n} $, the space of all pairs of such matrices are the same  for different values of~$ k $.

\medskip
We end this section by recalling some results of~\cite{MAK}. Suppose $ \AA $ is a maximal commutative algebra of $ \MM_d $. Let us then fix $ A,B\in \AA $, with the property that
\begin{equation}\label{eq:kerAkerB}
\ker A\cap \ker B=\{0\}
\end{equation}
We define the family $\mathcal{F}_{A,B}^{\mathcal{A}}$ by
\begin{equation}\label{eq:definition of FAB}
\mathcal{F}_{A,B}^{\mathcal{A}}=
\left\lbrace 
(T_{p-q})_{p,q=0}^{n-1}
:
T_{j}\in\mathcal{A},\quad AT_{j}=BT_{j-n}, j=1,2,\cdots n-1
\right\rbrace .
\end{equation}

The main result from~\cite{MAK} may be summarized as follows.

\begin{theorem}\label{th:MAK1}
	\begin{itemize}
		\item[(i)] The families $\mathcal{F}_{A,B}^{\mathcal{A}}$ are maximal subalgebras.
		
		\item[(ii)] We have $ \mathcal{F}_{A,B}^{\mathcal{A}} =\mathcal{F}_{A',B'}^{\mathcal{A}} $ if and only if $ AB'=A'B $.
		
			\item[(iii)] Suppose $\mathcal{B}$ contains an element  $T=(T_{i-j})_{i,j=0}^{n-1}$ such that $T_{r}$ is invertible for some $r\neq0$. Then $\mathcal{B}=\mathcal{F}_{A,B}^{\mathcal{A}}$ for some $A,B\in \AA$ satisfying~\eqref{eq:kerAkerB}.  
		
	\end{itemize}	
\end{theorem}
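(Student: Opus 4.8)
The plan is to prove all three parts around one recurring device: the \emph{probe} cyclic diagonal $G$ of order~$1$ whose two blocks are $(G_1,G_{1-n})=(B,A)$. Since $\AA$ is commutative, $AB=BA$, so $AG_1=AB=BA=BG_{1-n}$, and hence $G\in\FF_{A,B}^\AA$. For part (i), I would first note that $\FF_{A,B}^\AA$ is a linear subspace, being cut out by the linear conditions $AT_j=BT_{j-n}$, and then verify closure under products. The crucial step, and the one place the hypothesis $\ker A\cap\ker B=\{0\}$ is used, is that $S,T\in\FF_{A,B}^\AA$ implies $ST$ is block Toeplitz. Writing $Z_{ij}:=S_iT_{j-n}-S_{i-n}T_j$, I would compute, using $AS_i=BS_{i-n}$, $AT_j=BT_{j-n}$ and commutativity of the entries, that both $AZ_{ij}=0$ and $BZ_{ij}=0$; the columns of $Z_{ij}$ then lie in $\ker A\cap\ker B=\{0\}$, so $Z_{ij}=0$, which is exactly condition~\eqref{prod}. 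A parallel computation (or a reduction to cyclic diagonals via Lemma~\ref{le:prod gen perm toeplitz}, again using the kernel hypothesis) shows $A(ST)_m=B(ST)_{m-n}$, giving $ST\in\FF_{A,B}^\AA$.

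For maximality I would reuse the probe. The same computation, applied with $T=G$, shows that a block Toeplitz matrix $S$ with entries in $\AA$ satisfies $S\toep G$ if and only if $AS_i=BS_{i-n}$ for all $i$, i.e.\ if and only if $S\in\FF_{A,B}^\AA$. Hence if $\BB\supseteq\FF_{A,B}^\AA$ is any subalgebra of $\TT_{n,d}[\AA]$ and $S\in\BB$, then $SG\in\BB\subseteq\TT_{n,d}[\AA]$, so $S\toep G$ and therefore $S\in\FF_{A,B}^\AA$; thus $\BB=\FF_{A,B}^\AA$, proving maximality. Part (ii) is then short. For the ``if'' direction, assuming $AB'=A'B$ and $T\in\FF_{A,B}^\AA$, I would set $Z_j:=A'T_j-B'T_{j-n}$ and show exactly as above that $AZ_j=BZ_j=0$, whence $Z_j=0$ and $T\in\FF_{A',B'}^\AA$; the reverse inclusion is symmetric. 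For ``only if'' I feed the probe into the equality: $G\in\FF_{A,B}^\AA=\FF_{A',B'}^\AA$ forces $A'B=A'G_1=B'G_{1-n}=B'A=AB'$.

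For part (iii) I would lean on Theorem~\ref{th:general facts about maximal algebras}. Since $\BB$ is maximal, it is spanned by the cyclic diagonals it contains, it is $\mathfrak{S}_{n-1}$-invariant, and the space $W\subseteq\AA\times\AA$ of pairs $(S_k,S_{k-n})$ realized by its order-$k$ cyclic diagonals is independent of $k\ge1$. By Theorem~\ref{th:general facts about maximal algebras}(ii), $\mathbf{E}_r(T)\in\BB$, so $(T_r,T_{r-n})\in W$ with $T_r$ invertible; as $\AA$ is maximal commutative it contains $T_r^{-1}$, and I set $P:=T_{r-n}T_r^{-1}\in\AA$. For any $(X,Y)\in W$, the order-$1$ cyclic diagonals with blocks $(X,Y)$ and $(T_r,T_{r-n})$ both lie in $\BB$, so their (Toeplitz) product, together with Lemma~\ref{le:basic condition for product toeplitz}(i) at $i=j=1$, gives $XT_{r-n}=YT_r$, i.e.\ $Y=PX$. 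Thus every $(S_j,S_{j-n})$ satisfies $S_{j-n}=PS_j$, so $\BB\subseteq\FF_{P,I}^\AA$; since $\ker P\cap\ker I=\{0\}$, part (i) makes $\FF_{P,I}^\AA$ a subalgebra of $\TT_{n,d}[\AA]$, and maximality of $\BB$ forces $\BB=\FF_{P,I}^\AA$, with $(A,B)=(P,I)$ satisfying~\eqref{eq:kerAkerB}.

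I expect the main obstacle to be the closure step in part (i): showing that the product of two elements of $\FF_{A,B}^\AA$ stays block Toeplitz \emph{and} retains compatibility with $(A,B)$. The ``$Z_{ij}\in\ker A\cap\ker B$'' trick disposes of Toeplitzness cleanly, but confirming the compatibility of the product requires running through the several cases of Lemma~\ref{le:prod gen perm toeplitz} for products of cyclic diagonals, and it is precisely there that $\ker A\cap\ker B=\{0\}$ must be invoked a second time to reconcile the two candidate expressions for the lower block of a product. Everything else reduces to the single probe diagonal and to the structural results of Section~\ref{se:diagonals}.
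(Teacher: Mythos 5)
Your proposal is correct. For part (ii) --- the only part this paper proves itself --- your argument coincides with the paper's: multiply the defining relation by $A'$ and by $B'$, use \eqref{eq:kerAkerB} to conclude $A'T_j-B'T_{j-n}=0$, and feed a probe with blocks $(B,A)$ into the equality for the converse (the paper's probe is the full Toeplitz matrix with $T_j=B$, $T_{j-n}=A$; yours is a single cyclic diagonal, an immaterial difference). Where you genuinely diverge is in parts (i) and (iii): the paper does not prove these at all, citing~\cite{MAK}, whereas you reconstruct them inside the paper's own framework, and your reconstructions are sound. In (i), the identity $AZ_{ij}=BZ_{ij}=0$ for $Z_{ij}=S_iT_{j-n}-S_{i-n}T_j$ together with \eqref{eq:kerAkerB} does give Toeplitzness of products, and the equivalence ``$S\toep G$ if and only if $S\in\FF^{\AA}_{A,B}$'' for the probe $G$ turns maximality into a one-line argument. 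In (iii), Theorem~\ref{th:general facts about maximal algebras} (ii) and (iv) indeed make the set of realized pairs $(S_k,S_{k-n})$ independent of $k\ge 1$, $T_r^{-1}\in\AA$ follows from maximal commutativity of $\AA$, and the inclusion $\BB\subseteq\FF^{\AA}_{P,I}$ with $P=T_{r-n}T_r^{-1}$ plus maximality of $\BB$ closes the proof; since Theorem~\ref{th:general facts about maximal algebras} precedes Theorem~\ref{th:MAK1} and is proved independently of it, there is no circularity (you do read $\BB$ in (iii) as a maximal algebra, which is the intended hypothesis). One small inaccuracy in your closing paragraph: the kernel condition is \emph{not} needed a second time to get $A(ST)_m=B(ST)_{m-n}$; once Toeplitzness is known, a direct computation of $(ST)_{m,0}$ and $(ST)_{0,n-m}$, using only the defining relations and commutativity of $\AA$, yields this identity. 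In sum, your route buys a proof of all three parts that is self-contained within this paper, at the cost of length; the paper's route buys brevity by delegating (i) and (iii) to~\cite{MAK}.
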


The only point not stated explicitely in~\cite{MAK} is (ii), for which we sketch here
 a short proof. 	Suppose that $AB^{\prime}=BA^{\prime}$, and take $T=(T_{i-j})_{i,j=0}^{n-1}\in\mathcal{F}_{A,B}^{\mathcal{A}}$. If we multiply the defining relation $ 	AT_{j}=BT_{j-n} $ with $ A' $, we obtain, using $AB^{\prime}=BA^{\prime}$,
	\begin{equation}\label{eq:l1}
A(A^{\prime}T_{j}-B^{\prime}T_{j-n})=0.
\end{equation} 
Similarly, multiplication with $ B' $ leads to
	\begin{equation}\label{eq:l2}
B(A^{\prime}T_{j}-B^{\prime}T_{j-n})=0.
\end{equation} 
From \eqref{eq:l1} and \eqref{eq:l2} it follows easily, using~\eqref{eq:kerAkerB}, that $A^{\prime}T_{j}=B^{\prime}T_{j-n}$, and so  $T\in\mathcal{F}_{A^{\prime},B^{\prime}}^{\mathcal{A}}$. The reverse inclusion is proved in the same manner.

Conversely suppose that $\mathcal{F}_{A,B}^{\mathcal{A}}=\mathcal{F}_{A^{\prime},B^{\prime}}^{\mathcal{A}}$. The matrix $T=(T_{i-j})_{i,j=0}^{n-1} $ defined by $T_{j}=B$ and $T_{j-n}=A$ is obviously in $ \mathcal{F}_{A,B}^{\mathcal{A}} $. It must then be in $ \mathcal{F}_{A',B'}^{\mathcal{A}} $, whence $ AB'=A'B $.

\medskip

We will call an algebra of type $ \mathcal{F}_{A,B}^{\mathcal{A}} $, where $ A,B $ satisfy condition~\eqref{eq:kerAkerB}, a \emph{pseudocirculant} algebra. The name is suggested by the fact that these algebras generalize the $ k $-circulant algebras $ \Pi_\alpha $ from the scalar case.

Theorem~\ref{th:MAK1} states thus, in particular, that any maximal algebra that contains at least one element with an invertible off diagonal entry is a pseudocirculant algebra. As we will see below, in general these are  not all the maximal algebras.

\section{Concrete cases: two examples}

Beyond the general results stated in the previous section, the determination of all maximal algebras depends of course on the particular algebra of coefficients $ \AA $. In this section we will present two simple cases that were discussed in ~\cite{MAK}, in which 
all or almost all maximal algebras are pseudocirculant.
 The rest of the paper will be devoted to the case when $\AA$ is singly generated, which turns out to be much more interesting. In all three cases considered we will obtain a complete description of the maximal algebras.

\subsection{}

The case when $ \AA=\DD_d $ is the algebra of $ d\times d $ diagonal matrices is discussed in~\cite[Section 5]{MAK}, where it is proved that~\eqref{eq:definition of FAB} describes all maximal subalgebras  of $ \TT_{n,d} $. 

\begin{theorem}\label{th:MAK2}
	 If $ \AA=\DD_d $, then any maximal algebra contained in $ \TT_{n,d}[\DD_d]  $ is a pseudocirculant algebra. 	
\end{theorem}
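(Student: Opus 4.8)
The plan is to reduce the diagonal case to the already-available classification machinery, and the key reduction is to show that a maximal algebra $\BB\subset\TT_{n,d}[\DD_d]$ must contain an element with an invertible off-diagonal entry, so that Theorem~\ref{th:MAK1}(iii) applies directly. When $\AA=\DD_d$, every entry $A_k$ is a diagonal matrix $\mathrm{diag}(a_k^{(1)},\dots,a_k^{(d)})$. The crucial structural simplification is that a block Toeplitz matrix with diagonal entries decouples, after a fixed permutation of the underlying basis, into $d$ independent \emph{scalar} Toeplitz matrices: the $s$-th scalar diagonal of the block matrix is the Toeplitz matrix $(a_{i-j}^{(s)})_{i,j}$. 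Under this identification $\TT_{n,d}[\DD_d]\cong \TT_{n,1}\oplus\cdots\oplus\TT_{n,1}$ ($d$ copies), and multiplication respects the direct sum, so an algebra $\BB$ of such matrices projects onto $d$ algebras $\BB^{(1)},\dots,\BB^{(d)}$ of scalar Toeplitz matrices.

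First I would make this decoupling precise and observe that $\BB\subset \BB^{(1)}\oplus\cdots\oplus\BB^{(d)}$, where each $\BB^{(s)}$ is an algebra of scalar $n\times n$ Toeplitz matrices. Next I would invoke Theorem~\ref{th:general facts about maximal algebras}: by part~(i) we have $\DD_{n,d}[\DD_d]\subset\BB$, so $\BB$ contains all diagonal block Toeplitz matrices with arbitrary diagonal entries; by part~(v), $\E_k(\BB)\neq\{0\}$ for every $k$. The point of~(v) is that in each cyclic-diagonal slot $k\ge 1$ the algebra is genuinely nontrivial. I would then use these facts to produce, in at least one of the scalar summands $\BB^{(s)}$, a Toeplitz matrix whose order-$k$ cyclic diagonal is nonzero for some $k\neq 0$; rescaling, one arranges a scalar entry equal to $1$ in an off-diagonal slot of the $s$-th component. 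Combining this with the freedom in the other components guaranteed by $\DD_{n,d}[\DD_d]\subset\BB$ (one can add the full diagonal freely), I would assemble an element $T\in\BB$ whose off-diagonal block entry $T_r$ is invertible: in each scalar slot $s'$ the $r$-th coefficient is made nonzero, hence each diagonal entry of $T_r$ is nonzero, so $T_r\in\DD_d$ is invertible.

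Once such a $T$ with invertible $T_r$, $r\neq 0$, is exhibited, Theorem~\ref{th:MAK1}(iii) immediately yields $\BB=\FF_{A,B}^{\AA}$ for some $A,B\in\AA=\DD_d$ satisfying~\eqref{eq:kerAkerB}, which is exactly the assertion that $\BB$ is pseudocirculant, completing the proof. The main obstacle I anticipate is the assembly step: ensuring that the off-diagonal slot $r$ can be made simultaneously invertible across \emph{all} $d$ scalar components. The danger is that $\BB$ might, a priori, force some component $\BB^{(s')}$ to have a vanishing order-$r$ diagonal, which would make $T_r$ singular and block the argument. To overcome this I would argue component-wise using $\mathfrak{S}_{n-1}$-invariance (Theorem~\ref{th:general facts about maximal algebras}(iv)): by~(v) each slot $k\ge1$ carries a nonzero cyclic diagonal, and permuting cyclic diagonals lets me transport a nonzero entry into whichever slot $r$ I have chosen, for each scalar component separately; since $\DD_{n,d}[\DD_d]\subset\BB$ lets me adjust diagonal parts freely and independently, I can sum contributions from the several components into a single $T\in\BB$ with all $d$ scalar $r$-coefficients nonzero. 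It is at this point that the hypothesis $\AA=\DD_d$ is essential, because the decoupling into independent scalar Toeplitz algebras — and the consequent ability to treat each basis coordinate separately — is precisely what fails for a general maximal commutative $\AA$.
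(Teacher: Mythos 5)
Your reduction fails at its central step: it is not true that every maximal algebra $\BB\subset\TT_{n,d}[\DD_d]$ contains an element with an invertible off-diagonal entry. Concretely, take $d=2$, $A=\mathrm{diag}(0,1)$, $B=\mathrm{diag}(1,0)$, and let $\BB=\FF_{A,B}^{\DD_2}$; in your decoupled picture this is the algebra whose first scalar component is $\Pi_0$ and whose second is $\Pi_\infty$, i.e.\ its elements are the block Toeplitz matrices with $t^{(1)}_{j-n}=0$ and $t^{(2)}_{j}=0$ for $j=1,\dots,n-1$. Since $\ker A\cap\ker B=\{0\}$, Theorem~\ref{th:MAK1}(i) says this $\BB$ is maximal. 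Yet \emph{every} off-diagonal entry of \emph{every} element of $\BB$ is singular: for $r\ge 1$ the second diagonal entry of $T_r$ is forced to vanish, and for $r\le -1$ the first diagonal entry of $T_r$ is forced to vanish. So Theorem~\ref{th:MAK1}(iii) has nothing to be applied to, and the same obstruction occurs for every tuple $(\alpha_1,\dots,\alpha_d)$ containing both $0$ and $\infty$. The danger you yourself flagged (a component with a forced vanishing order-$r$ entry) is exactly realized here, and the $\mathfrak{S}_{n-1}$-fix you propose does not repair it: Theorem~\ref{th:general facts about maximal algebras}(v) only says that in each slot $k$ at least one of the \emph{two} entries $T_k$, $T_{k-n}$ of some cyclic diagonal is nonzero, and the action of $\sigma\in\mathfrak{S}_{n-1}$ sends upper entries to upper entries and lower entries to lower entries ($B_k=A_{\sigma^{-1}(k)}$, $B_{k-n}=A_{\sigma^{-1}(k)-n}$), so no permutation can transport the only available nonzero data in the second component (its lower entries $t^{(2)}_{k-n}$) into an upper slot $r\ge 1$.

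The gap is not patchable within your strategy, because the stepping stone you want is false; the theorem nevertheless holds because pseudocirculance can be verified directly from the definition~\eqref{eq:definition of FAB} rather than deduced from Theorem~\ref{th:MAK1}(iii). That is the route of the paper (following \cite[Section~5]{MAK}): after the same decoupling, use Theorem~\ref{th:general facts about maximal algebras}(i) to see that $\BB$ contains the $d$ diagonal block Toeplitz idempotents whose entries are the coordinate projections of $\bbC^d$; these split $\BB$ as a direct sum $\bigoplus_s \BB^{(s)}$ of algebras of scalar Toeplitz matrices, each of which must be maximal (otherwise one could enlarge that summand and contradict maximality of $\BB$), hence $\BB^{(s)}=\Pi_{\alpha_s}$ by the scalar classification of \cite{shalom}. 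Finally one checks by hand that $\bigoplus_s\Pi_{\alpha_s}=\FF_{A,B}^{\DD_d}$ with $A=\mathrm{diag}(a_s)$, $B=\mathrm{diag}(b_s)$, where $(a_s,b_s)=(\alpha_s,1)$ if $\alpha_s$ is finite and $(a_s,b_s)=(1,0)$ if $\alpha_s=\infty$; condition~\eqref{eq:kerAkerB} holds because no pair $(a_s,b_s)$ is $(0,0)$. This last verification also explains why the counterexample above is pseudocirculant even though it never satisfies the hypothesis of Theorem~\ref{th:MAK1}(iii).
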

Section~5 of~\cite{MAK} actually says more, giving a precise description of maximal subalgebras. After a reshuffling of bases, it is easy to see that $ \TT_{n,d}[\DD_d] $ is isomorphic to the direct sum of $ d $ copies of $ \TT_{n,1} $. Any maximal algebra splits with respect of this decomposition, and is therefore a direct sum of the generalized circulants $ \Pi_\alpha $ defined in the preliminaries. So the maximal algebras of block Toeplitz matrices are in one-to-one correspondence with the $ d $-tuples $ (\alpha_1, \dots, \alpha_d) $, with $ \alpha_i\in\bbC\cup \{\infty\} $.

\subsection{}

The next  example is taken from~\cite[Section 6]{MAK}. 
Fix positive integers $p,q$, such that $p+q=d$ and $ |p-q|\le 1 $.  Consider the family of  matrices $\mathcal{O}_{p, q}$ defined in Example~\ref{ex:maximal algebras} (3) above. The next result is proved in~\cite[Section 6]{MAK}.

\begin{theorem}\label{th:schur}
		The set $\mathcal{S}$ of  block Toeplitz matrices 
		in  $ \TT_{n,d}[\mathcal{O}_{p, q}] $
	 which have all off diagonal entries noninvertible forms a maximal subalgebra of $ \TT_{n,d}[\mathcal{O}_{p,q}] $ that is not a pseudocirculant algebra.
\end{theorem}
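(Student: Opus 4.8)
The plan is to verify three things about the set $\SS$: that it is closed under addition, that it is closed under multiplication (and hence is an algebra), that it is maximal, and finally that it is not of the pseudocirculant form $\FF_{A,B}^{\mathcal{A}}$. First I would recall the structure of a block Toeplitz matrix $T=(T_{i-j})$ with entries in $\OO_{p,q}$: each block $T_k$ has the form $\lambda_k I_d + N_k$, where $N_k$ is the strictly upper-triangular block $\begin{pmatrix} 0 & X_k \\ 0 & 0\end{pmatrix}$ in the $\bbC^p\oplus\bbC^q$ decomposition. The key elementary observation is that such a block $T_k$ is \emph{invertible} precisely when $\lambda_k\neq 0$, and \emph{noninvertible} precisely when $\lambda_k=0$, in which case $T_k=N_k$ is nilpotent with $N_k^2=0$. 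Thus membership of $T$ in $\SS$ is the condition $\lambda_k=0$ for all $k\neq 0$; equivalently, all off-diagonal blocks are of the pure nilpotent form $N_k$.

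Closure under addition is then immediate, since $\lambda_k=0$ for all $k\neq 0$ is preserved under sums. For closure under multiplication I would use Corollary~\ref{co:product} together with the product formulas: the $(i,j)$ block of $AB$ is $\sum_k A_{i-k}B_{k-j}$, and since each off-diagonal factor is nilpotent of square zero and the diagonal factors are scalar multiples of the identity, any product of two off-diagonal blocks $N_a N_b$ vanishes (both land in the same strictly-upper-triangular corner). Hence the only surviving contributions to an off-diagonal block of $AB$ pair a diagonal scalar block with an off-diagonal nilpotent block, which again produces a block of the form $\lambda N$, i.e.\ a noninvertible off-diagonal entry. So $AB\in\SS$, and one must also check $AB$ is block Toeplitz; here I would invoke Lemma~\ref{le:basic condition for product toeplitz}(i), verifying condition~\eqref{prod} $A_iB_{j-n}=A_{i-n}B_j$ for $i,j\geq 1$ — but for matrices in $\SS$ every off-diagonal block is a square-zero nilpotent in the same corner, so both sides are products of two such nilpotents and vanish identically, giving $A\toep B$ for free.

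For maximality, I would argue that $\SS$ is not contained in any strictly larger subalgebra of $\TT_{n,d}[\OO_{p,q}]$. Suppose $T\notin\SS$; then some off-diagonal block $T_r$ has $\lambda_r\neq 0$ and is therefore invertible. By Theorem~\ref{th:MAK1}(iii), any maximal algebra containing such a $T$ must be a pseudocirculant algebra $\FF_{A,B}^{\mathcal{A}}$, whose structure I would then contrast with $\SS$ to derive a contradiction: adjoining $T$ to $\SS$ forces relations $AT_j=BT_{j-n}$ that cannot simultaneously accommodate all the independent nilpotent off-diagonal blocks already present in $\SS$. Concretely, $\SS$ contains cyclic diagonals whose two defining blocks $T_k,T_{k-n}$ can be chosen as arbitrary independent nilpotents; a defining pair $(A,B)$ of a pseudocirculant imposes a single linear constraint linking $T_k$ to $T_{k-n}$, which the full nilpotent freedom in $\SS$ violates. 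This simultaneously shows that $\SS$ is \emph{not} a pseudocirculant algebra: if it were $\FF_{A,B}^{\mathcal{A}}$, then by definition $T_k$ would be determined from $T_{k-n}$ via $AT_k=BT_{k-n}$, whereas in $\SS$ these blocks range over all of the nilpotent part independently.

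The main obstacle I anticipate is the maximality step. Closure and the non-pseudocirculant claim are structural and follow from the nilpotency $N^2=0$ and the classification theorem, but proving that nothing can be added requires examining an arbitrary $T\in\TT_{n,d}[\OO_{p,q}]\setminus\SS$ and showing that the algebra generated by $\SS\cup\{T\}$ either falls outside $\TT_{n,d}[\OO_{p,q}]$ or fails to be block Toeplitz. Since such a $T$ has an invertible off-diagonal block, its powers and products with elements of $\SS$ generate diagonal scalar parts $\lambda_k I$ with $\lambda_k\neq 0$ in off-diagonal positions, and I would need to show that the compatibility conditions~\eqref{prod} forced by $T\toep\SS$ cannot hold for all of $\SS$ at once — this is the delicate case analysis where the hypothesis $|p-q|\le 1$ (guaranteeing $\OO_{p,q}$ is genuinely maximal commutative) enters, ruling out any enlargement of $\SS$ inside the ambient space.
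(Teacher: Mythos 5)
First, a caveat: the paper does not actually prove Theorem~\ref{th:schur} — it cites Section~6 of~\cite{MAK} — so your proposal can only be judged on its own terms, not against an in-paper argument. Your structural reduction is correct and the closure part is complete: writing each block as $T_k=\lambda_k I+N_k$ with $N_k$ a corner nilpotent, membership in $\SS$ means $\lambda_k=0$ for $k\neq0$, products of two corner nilpotents vanish, so both sides of~\eqref{prod} are zero for $A,B\in\SS$ (hence $A\toep B$), and the off-diagonal blocks of $AB$ reduce to $A_0B_{i-j}+A_{i-j}B_0$, which are again corner nilpotents.

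The genuine gap is in the maximality/non-pseudocirculant step. Your contradiction — ``a defining pair $(A,B)$ imposes a single linear constraint linking $T_k$ to $T_{k-n}$, which the full nilpotent freedom in $\SS$ violates,'' and ``by definition $T_k$ would be determined from $T_{k-n}$'' — is false as stated: if $A$ and $B$ are themselves both corner nilpotents, then $AT_j=BT_{j-n}$ reads $0=0$ for \emph{every} pair of corner nilpotents, so all of $\SS$ satisfies the constraint and nothing is determined. What kills this case is condition~\eqref{eq:kerAkerB}, which you never invoke. The correct argument is: taking $(T_j,T_{j-n})=(N,0)$ and $(0,N)$ inside $\SS$ forces $AN=0$ and $BN=0$ for every corner nilpotent $N$; since $AN=\alpha N$ when $A=\alpha I+N_A$, this forces the scalar parts of both $A$ and $B$ to vanish, so both are corner nilpotents, and then $\ker A\cap\ker B\supseteq\bbC^{p}\oplus\{0\}\neq\{0\}$, contradicting~\eqref{eq:kerAkerB}. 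With this line added, $\SS$ is contained in no $\FF_{A,B}^{\AA}$, which yields non-pseudocirculancy directly and yields maximality via Theorem~\ref{th:MAK1}(iii) — provided you also make explicit the routine step (finite-dimensionality) that any algebra strictly containing $\SS$ inside $\TT_{n,d}[\OO_{p,q}]$ extends to a \emph{maximal} one, so that (iii) applies.

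Separately, maximality has a much shorter direct proof that bypasses Theorem~\ref{th:MAK1} altogether: if $T\in\TT_{n,d}[\OO_{p,q}]\setminus\SS$, say $T_r=\lambda_r I+N_r$ with $r\geq1$ and $\lambda_r\neq0$ (the case of an invertible entry with negative index is symmetric), choose $B\in\SS$ to be a cyclic diagonal with $B_j=0$ and $B_{j-n}=N\neq0$; then $T_rB_{j-n}=\lambda_r N\neq0=T_{r-n}B_j$, so~\eqref{prod} fails, $TB$ is not block Toeplitz, and no subalgebra of $\TT_{n,d}[\OO_{p,q}]$ can contain both $\SS$ and $T$. This also shows that, contrary to your last paragraph, the hypothesis $|p-q|\leq1$ plays no role in the compatibility analysis; it is needed only to guarantee that $\OO_{p,q}$ is a maximal commutative subalgebra of $\MM_d$, i.e., for the ambient setting of the theorem.
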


It follows that if $ \AA=\mathcal{O}_{p,q} $ there is a single maximal subalgebra, namely $ \SS $, that is not  pseudocirculant.

The next sections will determine all maximal subalgebras of $ \TT_{n,d}[\PP(M)] $, where $ M $ is a nonderogatory matrix. It will be seen in Example~\ref{ex:k+k-} that in that case we have many different subalgebras all of whose nondiagonal entries are noninvertible.

\section{Block Toeplitz matrices with  entries in a singly generated algebra}

Let $M$ be a fixed nonderogatory $d\times d$ matrix with  minimal (and characteristic) polynomial  $\mathbf p(X)$. In the sequel we will use bold letters to denote polynomials; $ \bm 1 $~will be the unit in the ring of polynomials $ \bbC[X] $.

Our purpose is to obtain a complete description of all maximal subalgebras of $ \TT_{n,d}[\PP(M)] $, where $ \PP(M) $ is the algebra generated by $ M $.

\begin{definition}\label{de:algebras A(r, rho)}
	Suppose $\mathbf p_+, \mathbf p_-,\bm \upchi\in \bbC[X]$ are three polynomials, 
	such that $ \mathbf p_+\mathbf p_-$ divides $\mathbf p $, while $\bm \upchi $ and $\mathbf p $ are relatively prime; denote $ \mathbf q=\mathbf p/\mathbf p_+\mathbf p_- $.
 We define  $ \mathfrak{B}(\mathbf p_+,\mathbf p_-,\bm  \upchi) $ to be the set of matrices $ A=(A_{i-j})_{i,j=0}^{n-1} $, where:
	\begin{itemize}
		\item[(a)] $A_i\in\PP(M)$ for all $ i $ with $ -(n-1)\le i\le n-1 $;
		
		\item[(b)] $A_i=\mathbf p_+(M)\mathbf a_i(M)$, $ A_{i-n}=\mathbf p_-(M)\mathbf a_{i-n}(M) $ for $ i\ge 1 $, where $\mathbf a_i\in\bbC[X] $;
		
		\item[(c)]  $\mathbf q$ divides $\mathbf a_{i}-\bm  \upchi\mathbf  a_{i-n}$ for all $ i\ge 1 $.
	\end{itemize}

	\end{definition}

\begin{theorem}\label{th:algebras A(r, rho)}

	 Any $ \mathfrak{B}(\mathbf p_+,\mathbf p_-,\bm  \upchi) $ is a commutative algebra of block Toeplitz matrices. 
			
\end{theorem}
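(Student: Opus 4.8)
The plan is to verify the two required properties separately: that $\mathfrak{B}(\mathbf p_+,\mathbf p_-,\bm\upchi)$ is closed under multiplication and yields block Toeplitz products, and that it is commutative. Commutativity comes essentially for free: since all entries lie in $\PP(M)$, a commutative algebra, any two block Toeplitz matrices in the set whose product is again block Toeplitz automatically commute by Corollary~\ref{co:product}~(iii) (or~(ii)). So once I establish that the set is a multiplicatively closed family of block Toeplitz matrices, commutativity follows immediately. The linear structure (closure under addition and scalar multiplication) is routine to check directly from conditions (a)--(c), since those conditions are linear in the coefficient polynomials $\mathbf a_i,\mathbf a_{i-n}$.

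The heart of the matter is thus to show that if $A,B\in\mathfrak{B}(\mathbf p_+,\mathbf p_-,\bm\upchi)$, then $A\toep B$ and $AB\in\mathfrak{B}(\mathbf p_+,\mathbf p_-,\bm\upchi)$. By Lemma~\ref{le:basic condition for product toeplitz}~(i), the block Toeplitz property of $AB$ is equivalent to the compatibility relations $A_iB_{j-n}=A_{i-n}B_j$ for all $i,j\in\{1,\dots,n-1\}$. I would translate these into polynomial identities in $M$. Writing $A_i=\mathbf p_+(M)\mathbf a_i(M)$, $A_{i-n}=\mathbf p_-(M)\mathbf a_{i-n}(M)$ and similarly for $B$ with coefficients $\mathbf b$, the relation $A_iB_{j-n}=A_{i-n}B_j$ becomes
\[
\mathbf p_+(M)\mathbf a_i(M)\,\mathbf p_-(M)\mathbf b_{j-n}(M)=\mathbf p_-(M)\mathbf a_{i-n}(M)\,\mathbf p_+(M)\mathbf b_j(M),
\]
that is, $\mathbf p_+(M)\mathbf p_-(M)\bigl(\mathbf a_i\mathbf b_{j-n}-\mathbf a_{i-n}\mathbf b_j\bigr)(M)=0$. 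Since $M$ is nonderogatory with minimal polynomial $\mathbf p=\mathbf p_+\mathbf p_-\mathbf q$, a polynomial $\mathbf f$ satisfies $\mathbf f(M)=0$ iff $\mathbf p\mid\mathbf f$; hence this identity holds iff $\mathbf p_+\mathbf p_-\mathbf q\mid \mathbf p_+\mathbf p_-(\mathbf a_i\mathbf b_{j-n}-\mathbf a_{i-n}\mathbf b_j)$, i.e. iff $\mathbf q\mid\mathbf a_i\mathbf b_{j-n}-\mathbf a_{i-n}\mathbf b_j$. The plan is to deduce this from condition~(c) applied to both $A$ and $B$: modulo $\mathbf q$ we have $\mathbf a_i\equiv\bm\upchi\,\mathbf a_{i-n}$ and $\mathbf b_j\equiv\bm\upchi\,\mathbf b_{j-n}$, so $\mathbf a_i\mathbf b_{j-n}\equiv\bm\upchi\,\mathbf a_{i-n}\mathbf b_{j-n}\equiv\mathbf a_{i-n}\mathbf b_j\pmod{\mathbf q}$, which is exactly what is needed. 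This establishes $A\toep B$.

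It remains to check that the product $C=AB$ satisfies conditions (a)--(c) with the same triple $(\mathbf p_+,\mathbf p_-,\bm\upchi)$. Condition (a) is clear since $\PP(M)$ is an algebra. For (b) and (c) I would compute the cyclic-diagonal entries $C_i,C_{i-n}$ of the product. Because $C$ is block Toeplitz, each diagonal is constant, so I can read off $C_i$ and $C_{i-n}$ from any convenient matrix entry, for instance using the formula $C_{i,j}=\sum_k A_{i-k}B_{k-j}$ and grouping the terms according to whether the shift wraps around. Each such product $A_{i'}B_{j'}$ carries a factor $\mathbf p_+(M)$ or $\mathbf p_-(M)$ from each of the two factors; I expect the positive diagonals $C_i$ ($i\ge 1$) to factor through $\mathbf p_+(M)$ and the negative diagonals $C_{i-n}$ through $\mathbf p_-(M)$, giving (b) with new coefficient polynomials built from the $\mathbf a$'s and $\mathbf b$'s. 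The divisibility condition (c) for $C$ should then follow by the same congruence argument modulo $\mathbf q$ used above, combined with condition (c) for $A$ and $B$. I expect the main obstacle to be the bookkeeping in this last step: correctly identifying, from the wrap-around structure of the summation in $C_{i,j}$, which terms contribute to $C_i$ versus $C_{i-n}$ and verifying that the resulting coefficient polynomials indeed satisfy the $\mathbf q$-divisibility of~(c). The underlying algebra is straightforward once the indexing is pinned down, so the difficulty is combinatorial rather than conceptual.
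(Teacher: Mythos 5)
Your proposal is correct, and its arithmetic core is the same as the paper's: both verify the compatibility relations \eqref{prod} by cancelling the factor $\mathbf p_+\mathbf p_-$ against $\mathbf p=\mathbf p_+\mathbf p_-\mathbf q$ and then invoking condition (c) of Definition~\ref{de:algebras A(r, rho)}, which you phrase as a congruence modulo $\mathbf q$ and the paper writes with explicit cofactor polynomials $\mathbf u,\mathbf v$. The structural route differs, though. The paper first decomposes $\mathfrak{B}(\mathbf p_+,\mathbf p_-,\bm \upchi)$ as the linear span of the subspaces $\mathfrak{B}_k=\mathfrak{B}(\mathbf p_+,\mathbf p_-,\bm \upchi)\cap\CD_k$, checks multiplicative closure only for a product of two cyclic diagonals---where Lemma~\ref{le:prod gen perm toeplitz} hands over the product's two nonzero entries, so conditions (b) and (c) are verified on a single pair of polynomials---and then extends to general elements by linearity; you instead keep arbitrary $A,B$ throughout and extract $C_i$, $C_{i-n}$ from the convolution $C_{i,j}=\sum_k A_{i-k}B_{k-j}$. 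The paper's reduction buys very short verifications at the cost of invoking Lemma~\ref{le:prod gen perm toeplitz} plus a final linearity step; your route is self-contained (it never needs the cyclic-diagonal product formulas) but carries the wrap-around bookkeeping you flagged. That bookkeeping does close, so there is no gap: in $C_{i,0}=\sum_k A_{i-k}B_k$ (with $i\ge1$) every summand is a multiple of $\mathbf p_+(M)$, because for $k\le i$ at least one of the indices $i-k$, $k$ is positive, while for $k>i$ the term carries both $\mathbf p_-(M)$ and $\mathbf p_+(M)$; symmetrically, every summand of $C_{i-1,n-1}$ is a multiple of $\mathbf p_-(M)$. Reducing the resulting cofactor polynomials modulo $\mathbf q$ via condition (c) for $A$ and $B$, the expressions for $\mathbf c_i$ and for $\bm \upchi\,\mathbf c_{i-n}$ agree term by term after shifting the summation index by one, which is exactly condition (c) for $C$. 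Your handling of the linear structure and of commutativity via Corollary~\ref{co:product}~(iii) is identical to the paper's.
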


\begin{proof} (i)
		It is easy to check that $ \mathfrak{B}(\mathbf p_+,\mathbf p_-,\bm  \upchi) $ is a vector space that is linearly generated by its subspaces $\mathfrak{B}_k:=\mathfrak{B}(\mathbf p_+,\mathbf  p_-,\bm  \upchi)\cap \CD_k$, $k=0, \dots, n-1$. 
	
	Fix then $k,\ell$, and take $A\in \mathfrak{B}_k$, $B\in \mathfrak{B}_\ell$. Therefore, if
	$A=(A_{i-j})_{i,j=1}^{n-1}$, $B=(B_{i-j})_{i,j=1}^{n-1}$, all entries of $A,B$ are zero except $A_k=\mathbf p_+(M)\mathbf a_k(M)$, $A_{k-n}=\mathbf p_-(M)\mathbf a_{k-n}(M)$, $B_\ell= \mathbf p_+(M)\mathbf b_\ell(M)$, $B_{\ell-n}=\mathbf p_-(M)\mathbf b_{\ell-n}(M)$, and there are 
	polynomials $\mathbf u,\mathbf v\in\mathbb{C}[X]$ such that 
	\begin{equation} \label{p1}
	\begin{split}
		\mathbf a_k-\bm  \upchi \mathbf a_{k-n}= \mathbf q\mathbf u , \\
	\mathbf b_\ell-\bm  \upchi\mathbf b_{\ell-n}= \mathbf q\mathbf v.
	\end{split}
	\end{equation}
	
We want to use Lemma~\ref{le:basic condition for product toeplitz} to show that $C=AB$ is also a block Toeplitz matrix. Formula~\eqref{prod} has to be checked only for $p=k$, $q=\ell$, since for other values of $p,q$ both terms are zero. It becomes then  
\[
\mathbf p_+(M)\mathbf p_-(M)\mathbf a_k(M)\mathbf b_{\ell-n}(M)= \mathbf p_+(M)\mathbf p_-(M)\mathbf  a_{k-n}(M)\mathbf b_\ell(M).
\]
Using \eqref{p1}, we have
\[
\begin{split}
&\mathbf p_+(M)\mathbf p_-(M)\mathbf a_k(M)\mathbf b_{\ell-n}(M)- \mathbf p_+(M)\mathbf p_-(M) \mathbf a_{k-n}(M)\mathbf b_\ell(M)
\\
&\qquad =\mathbf p_+(M) \mathbf p_-(M)
[\bm  \upchi(M)\mathbf a_{k-n}(M)+ \mathbf q(M)\mathbf u(M) ]\mathbf b_{\ell-n}(M)\\
&\qquad\qquad\qquad-\mathbf p_+(M)\mathbf p_-(M)
\mathbf a_{k-n}(M)
[\bm  \upchi(M)\mathbf b_{\ell-n}(M)+\mathbf q(M)\mathbf v(M)]\\
&\qquad=\mathbf p(M)[\mathbf u(M) \mathbf b_{\ell-n}(M)-\mathbf a_{k-n}(M)\mathbf v(M)]=0.\\
\end{split}
\]
Therefore Lemma~\ref{le:basic condition for product toeplitz} implies that $C=AB$ is  a block Toeplitz matrix. 	
	
	We have now to  prove that $ C=(C_{i,j})_{i,j=0}^{n-1} $ satisfies the condition in the definition of $ \mathfrak{B}(\mathbf p_+, \mathbf p_-,\bm  \upchi) $. As a consequence of Lemma~\ref{le:prod gen perm}, we have to consider two cases.
	
	The first case is $k+\ell\le n-1$. Then $C\in \CD_{k+\ell}$ by Lemma~\ref{le:prod gen perm}. We already know that $ C $ is a block Toeplitz matrix, so it is enough to take check $ C_{k+\ell, 0} $ and $ C_{k+\ell-n, 0}  $. Also, we will assume  $ k,\ell\ge1 $; if one of them is 0 the formulas are simpler.
	
 By Lemma~\ref{le:prod gen perm toeplitz} we have
	\[
	\begin{split}
	C_{k+\ell, 0}&=\mathbf p_+(M)^2\mathbf a_k(M)\mathbf b_\ell(M), \\
	C_{k+\ell-n, 0} &=
	\mathbf p_+(M)\mathbf p_-(M)\mathbf a_k(M)\mathbf b_{\ell-n}(M).
	\end{split}
	\]
	So 
	\[
	C_{k+\ell, 0}= \mathbf p_+(M)\mathbf c_{k+\ell}(M), \qquad
	C_{k+\ell-n, 0}= \mathbf p_-(M)\mathbf c_{k+\ell-n}(M),
	\]
	where we may take
	\[
	\mathbf c_{k+\ell}= \mathbf p_+\mathbf a_k\mathbf b_\ell,\quad
	\mathbf c_{k+\ell-n}= \mathbf p_+\mathbf a_k\mathbf b_{\ell-n}.
	\]
	Then 
	\[
	\mathbf c_{k+\ell}- \bm  \upchi\mathbf c_{k+\ell-n} 
	= \mathbf a_k \mathbf p_+ (\mathbf b_\ell - \bm  \upchi \mathbf b_{\ell-n}  ).
	\]
Since $\mathbf  q  $ divides  $\mathbf b_\ell - \bm  \upchi \mathbf b_{\ell-n}    $, it also divides  $ \mathbf c_{k+\ell} - \bm  \upchi \mathbf c_{k+\ell-n}  $, which is what we wanted to prove.

We leave to the reader to verify the second case $k+\ell\ge n$.
We have thus shown that if $ A $ and $ B $ belong to two of the subspaces $ \mathfrak{B}_k $, then $ C=AB $ also belongs to one of these subspaces. This fact extends by linearity to the whole of $\mathfrak{B}({\mathbf p_+, \mathbf p_-, \bm  \upchi})$, which is therefore an algebra. Finally, it is a commutative algebra as a consequence of Corollary~\ref{co:product} (iii).
\end{proof}

The following result shows, in particular, that any two algebras of this type are either equal or not comparable.

\begin{theorem}\label{th:uniqueness}
	Suppose $ \mathbf p_+,\mathbf p_-, \bm \upchi,  \mathbf p'_+,\mathbf p'_-,\bm  \upchi'\in\bbC[X] $ are such that $  \mathbf p_+\mathbf p_- $
	and $  \mathbf p'_+\mathbf p'_- $ divide $ \mathbf p $,  $\bm  \upchi $ and $  \mathbf p $ are relatively prime, and $\bm  \upchi' $ and $  \mathbf p $ are also relatively prime. The following assertions are equivalent:
	\begin{enumerate}
		\item $\mathfrak{B}(\mathbf p_+,\mathbf p_-, \bm  \upchi)=\mathfrak{B}(\mathbf p_+^{\prime},\mathbf p_-^{\prime}, \bm  \upchi^{\prime})$.
		
			\item $\mathfrak{B}(\mathbf p_+,\mathbf p_-, \bm  \upchi)\subset\mathfrak{B}(\mathbf p_+^{\prime},\mathbf p_-^{\prime}, \bm  \upchi^{\prime})$.
			
			\item We have 
			 \begin{itemize}
				\item[(i)]  $\mathbf p_+^{\prime}$ is a nonzero scalar multiple of $ \mathbf p_+ $,
				\item[(ii)] $\mathbf p_-^{\prime}$ is a nonzero scalar multiple of $ \mathbf p_- $,
				\item[(iii)] $ \mathbf q $ divides $ \bm  \upchi - \bm  \upchi^{\prime} $,
			
			\end{itemize}

	\end{enumerate}

\end{theorem}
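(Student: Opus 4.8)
The plan is to reduce the comparison of the two algebras to a comparison of a single subset of $\PP(M)\times\PP(M)$, and then to run the chain $(1)\Rightarrow(2)\Rightarrow(1)$ and $(1)\Rightarrow(3)\Rightarrow(1)$. Throughout I identify $\PP(M)$ with $R=\bbC[X]/(\mathbf p)$; for a divisor $\mathbf g$ of $\mathbf p$ I will use the elementary facts that the ideal $\mathbf g(M)R$ has dimension $\deg\mathbf p-\deg\mathbf g$, that $\mathbf g(M)R\subseteq\mathbf h(M)R$ iff $\mathbf h\mid\mathbf g$, and that $\operatorname{ann}_R(\mathbf g(M))=(\mathbf p/\mathbf g)(M)R$. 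The first move is to rewrite condition (c): multiplying $\mathbf a_i-\bm\upchi\mathbf a_{i-n}$ by $\mathbf p_+\mathbf p_-$ and using $\mathbf p(M)=0$, one checks that (c) is equivalent to the intrinsic identity $\mathbf p_-(M)A_i=\bm\upchi(M)\mathbf p_+(M)A_{i-n}$. Consequently a block Toeplitz matrix lies in $\mathfrak B(\mathbf p_+,\mathbf p_-,\bm\upchi)$ iff $A_0\in R$ is arbitrary and, for every $k\ge1$, the pair $(A_k,A_{k-n})$ lies in
\[
S(\mathbf p_+,\mathbf p_-,\bm\upchi)=\{(u,v)\in\mathbf p_+(M)R\times\mathbf p_-(M)R:\ \mathbf p_-(M)u=\bm\upchi(M)\mathbf p_+(M)v\}.
\]
Thus inclusion (resp.\ equality) of the two algebras is equivalent to inclusion (resp.\ equality) of the associated sets $S$ and $S'$.

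The engine is a dimension count. Writing $S=\ker\Phi$ for the linear map $\Phi(u,v)=\mathbf p_-(M)u-\bm\upchi(M)\mathbf p_+(M)v$, the image of $\Phi$ is $\mathbf p_-(M)\mathbf p_+(M)R+\bm\upchi(M)\mathbf p_+(M)\mathbf p_-(M)R$; since $\bm\upchi(M)$ is invertible (because $\bm\upchi$ is prime to $\mathbf p$), both summands equal $(\mathbf p_+\mathbf p_-)(M)R$, so the image has dimension $\deg\mathbf q$. Rank–nullity then gives $\dim_\bbC S=(d-\deg\mathbf p_+)+(d-\deg\mathbf p_-)-\deg\mathbf q=d$, independently of the triple. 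In particular $(1)\Rightarrow(2)$ is trivial, and $(2)\Rightarrow(1)$ is immediate: an inclusion $S\subseteq S'$ of two subspaces of the same finite dimension $d$ is an equality, whence $\mathfrak B=\mathfrak B'$.

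To extract (3) from equality of the algebras I use test elements, so I may assume $S=S'$. Taking $v=0$ gives $\{u:(u,0)\in S\}=\operatorname{ann}_R(\mathbf p_-(M))\cap\mathbf p_+(M)R=(\mathbf p/\mathbf p_-)(M)R$, and symmetrically $\{v:(0,v)\in S\}=(\mathbf p/\mathbf p_+)(M)R$. Equality $S=S'$ therefore forces $(\mathbf p/\mathbf p_-)(M)R=(\mathbf p/\mathbf p_-')(M)R$ together with the analogous identity for $\mathbf p_+$, which by the divisor–ideal dictionary say that $\mathbf p_+,\mathbf p_+'$ are scalar multiples of one another, and likewise $\mathbf p_-,\mathbf p_-'$; this is (i) and (ii). For (iii) I feed in the single pair $(\mathbf p_+(M)\bm\upchi(M),\mathbf p_-(M))$, which lies in $S$ since in the $\mathbf a,\mathbf b$ coordinates it corresponds to $\mathbf a=\bm\upchi$, $\mathbf b=\mathbf 1$, so $\mathbf a-\bm\upchi\mathbf b=0$; requiring it to lie in $S'$ yields $\mathbf q\mid(c'\bm\upchi-c\bm\upchi')$, where $c,c'$ are the scalars from (i),(ii). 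The converse $(3)\Rightarrow(1)$ is then a direct verification that the two (reformulated) relations cut out the same subset of the common ambient $\mathbf p_+(M)R\times\mathbf p_-(M)R$, using $\gcd(\bm\upchi,\mathbf q)=1$.

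The step I expect to be delicate is exactly this last bookkeeping. The invariant produced by the test element is $\mathbf q\mid c'\bm\upchi-c\bm\upchi'$, not literally $\mathbf q\mid\bm\upchi-\bm\upchi'$, reflecting the fact that rescaling $\mathbf p_+$ by $c$ and $\mathbf p_-$ by $c'$ changes $S$ in the same way as rescaling $\bm\upchi$ by $c/c'$. To make (iii) come out as stated one must fix the scalar freedom in the representatives of $\mathbf p_+,\mathbf p_-$ (for instance normalizing them to be monic, so that $c=c'=1$ in the comparison), after which $\mathbf q\mid\bm\upchi-\bm\upchi'$ is precisely the residual content; the coprimality of $\bm\upchi$ with $\mathbf q$ then guarantees that the two defining relations coincide and closes the equivalence. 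Pinning down this normalization, and verifying that it is genuinely immaterial exactly in the degenerate case where $\mathbf q$ is constant, is the only point requiring care.
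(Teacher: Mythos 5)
Your proposal is correct in substance and follows a genuinely different route from the paper. The paper never introduces your set $S$: it works directly with polynomial representatives and proves $(2)\Rightarrow(3)$ by inserting explicit test elements into the larger algebra --- first the cyclic diagonal $E\in\CD_1^\TT$ with $E_1=\mathbf p_+(M)\bm\upchi(M)$, $E_{1-n}=\mathbf p_-(M)$ (the same pair you use), which gives $\mathbf p'_+\mid \mathbf p_+$ and $\mathbf p'_-\mid\mathbf p_-$; then a two-parameter family $J(\alpha,\beta)\in\CD_1^\TT$ with entries $\alpha\,\mathbf q(M)\mathbf p_+(M)\bm\upchi(M)$ and $\beta\,\mathbf q(M)\mathbf p_-(M)$, whose membership for all $\alpha,\beta$ upgrades these divisibilities to equalities up to nonzero scalars; and finally $E$ once more to compare $\bm\upchi$ with $\bm\upchi'$. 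Your rank--nullity computation ($\dim_\bbC S=d$ for every admissible triple) replaces the entire $J(\alpha,\beta)$ step and yields $(2)\Rightarrow(1)$ in one stroke, while your zero-slices $\{u:(u,0)\in S\}=(\mathbf p/\mathbf p_-)(M)R$ and $\{v:(0,v)\in S\}=(\mathbf p/\mathbf p_+)(M)R$ recover (i) and (ii) from the ideal lattice of $\bbC[X]/(\mathbf p)$ rather than from divisibility manipulations. This is tighter than the paper's argument and isolates the actual invariants.

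The ``delicate point'' you flag at the end is not a weakness of your proof; it is a genuine flaw in the statement, which the paper's own proof overlooks (it dismisses $(3)\Rightarrow(1)$ as easy to check, and in the last stage of $(2)\Rightarrow(3)$ it silently replaces $\mathfrak{B}(\mathbf p'_+,\mathbf p'_-,\bm\upchi')$ by $\mathfrak{B}(\mathbf p_+,\mathbf p_-,\bm\upchi')$ --- exactly where the scalars get lost). The correct invariant is the one you derive: $\mathbf q\mid c'\bm\upchi-c\bm\upchi'$, where $\mathbf p'_+=c\,\mathbf p_+$ and $\mathbf p'_-=c'\,\mathbf p_-$. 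Indeed, (3) as stated does not imply (1): take $\mathbf p_+=\mathbf p_-=\bm\upchi=\bm\upchi'=\bm 1$ and $\mathbf p'_+=2$, $\mathbf p'_-=\bm 1$. Then (3)(i)--(iii) hold, but $\mathfrak{B}(\bm 1,\bm 1,\bm 1)$ consists of the block circulants ($A_k=A_{k-n}$ for $k\ge 1$), whereas $\mathfrak{B}(2,\bm 1,\bm 1)$ consists of the matrices with $A_k=2A_{k-n}$, and these algebras are different. So the theorem is true only after fixing the scalar freedom (e.g.\ requiring $\mathbf p_\pm,\mathbf p'_\pm$ to be monic), or after replacing (iii) by $\mathbf q\mid c'\bm\upchi-c\bm\upchi'$; your argument establishes precisely this corrected statement and is complete once the deferred routine verifications are written out.
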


\begin{proof}

The implication (1)$ \implies $(2) is obvious, while (3)$ \implies $(1) is easy to check. So we are left to prove that (2)$ \implies $(3).
 
 Suppose then that  $\mathfrak{B}(\mathbf p_+,\mathbf p_-, \bm  \upchi)\subset\mathfrak{B}(\mathbf p_+',\mathbf p_-', \bm  \upchi')$.
Define $ E=(e_{i-j})\in \CD^\TT_1 $ by 
\begin{equation}\label{eq:def of E}
e_i=
\begin{cases}
\mathbf p_+(M) \bm  \upchi(M)&\text{for }i=1,\\
 \mathbf p_-(M) &\text{for }i=1-n,\\
 0&\text{otherwise.}
\end{cases}
\end{equation} 
It is immediate that $E\in \mathfrak{B}(\mathbf p_+,\mathbf p_-, \bm  \upchi)  $, whence $E\in \mathfrak{B}(\mathbf p'_+,\mathbf p'_-, \bm  \upchi')  $. Therefore $ e_1=\mathbf p'_+(M)\mathbf a'_1(M) $, whence $ \mathbf p_+  \bm  \upchi -\mathbf p'_+ \mathbf a'_1  $ is a multiple of $ \mathbf p  $. So $ \mathbf p_+ \bm  \upchi $ is divisible by $ \mathbf p'_+ $. But $  \bm  \upchi $ and $\mathbf  p $ are relatively prime, whence $  \bm  \upchi $ and $ \mathbf p'_+ $ are also relatively prime. It follows that $ \mathbf p_+ $is divisible by $ \mathbf p'_+ $. The case of $ \mathbf p_- $ and $ \mathbf p'_- $ is similar.

Let us now fix  $ \alpha, \beta\in\bbC $ and
define  $ J(\alpha,\beta)\in \CD_1^\TT $ by 
\[
J(\alpha,\beta)_{i,j}=
\begin{cases}
\alpha \mathbf q(M)\mathbf p_+(M) \bm  \upchi(M)&\text{for }i-j=1,\\
 \beta \mathbf q(M) \mathbf p_-(M) &\text{for }i-j=1-n,\\
0&\text{otherwise}.
\end{cases}
\]
It is immediate that $ J(\alpha, \beta)\in \mathfrak{B}(\mathbf p_+,\mathbf p_-, \bm  \upchi) $, so it also belongs to
 $  \mathfrak{B}(\mathbf p_+',\mathbf p'_-, \bm  \upchi') $. By the definition of this latter algebra, there exist polynomials $ \mathbf a'_1 , \mathbf a'_{1-n}  $
such that $ \alpha \mathbf q(M)\mathbf p_+(M) \bm  \upchi(M)= \mathbf p'_+(M)\mathbf a'_1(M)$, 
$ \beta q(M) \mathbf p_-(M)= \mathbf p_-(M) \mathbf a'_{1-n}(M) $ and $ \mathbf q $ divides
$ \mathbf a_1' - \bm  \upchi'\mathbf  a_{1-n}' $.
So there exist polynomials $\mathbf  u ,\mathbf  v ,\mathbf  w  $ such that
\begin{align}
\alpha \mathbf q \mathbf p_+  \bm  \upchi &= \mathbf p'_+ \mathbf a'_1 +\mathbf p  \mathbf u ,\label{eq:11}\\
\beta \mathbf q  \mathbf p_- &= \mathbf p_-  \mathbf a'_{1-n} +\mathbf p \mathbf  v ,\label{eq:12}\\
 \mathbf a_1' - \bm  \upchi' \mathbf a_{1-n}' &= \mathbf q' \mathbf w .\label{eq:13}
\end{align}

Multiply~\eqref{eq:11} by $\mathbf p'_- $,~\eqref{eq:11} by $\mathbf p'_+  \bm  \upchi' $ and substracting, we obtain, using~\eqref{eq:13},
\[
\begin{split}
&\alpha \mathbf q \mathbf p_+  \bm  \upchi \mathbf p'_- 
-\beta \mathbf q  \mathbf p_- \mathbf p'_+  \bm  \upchi' \\
&\qquad\qquad=
\mathbf p'_+ \mathbf p'_- (\mathbf  a_1' - \bm  \upchi'\mathbf  a_{1-n}' )\\
&\qquad\qquad\qquad\qquad+\mathbf p \big(\mathbf p'_-\mathbf  u -\mathbf p'_+  \bm  \upchi' \mathbf v \big)\\
&\qquad\qquad= \mathbf p \big( \mathbf w +\mathbf p'_-\mathbf  u -\mathbf p'_+  \bm  \upchi' \mathbf v  \big).
\end{split}
\]
The right hand side is divisible by $\mathbf  p $, and so the same must be true about the left hand side for any values of $ \alpha, \beta $, which means that both terms have to be divisible by $\mathbf  p $. 

But $ \mathbf q\mathbf p_+\mathbf p'_- \bm  \upchi $ divisible by $\mathbf  p $ implies $ \mathbf q\mathbf p_+\mathbf p'_- $ divisible by $\mathbf  p $ (since $  \bm  \upchi $ and $ \mathbf p $ are relatively prime). Since we have shown that $ \mathbf p'_- $ divides $ \mathbf p_- $, it follows that $ \mathbf q\mathbf p_+\mathbf p'_- $ divides $\mathbf q\mathbf p_+ \mathbf p_-=\mathbf p $. So $ \mathbf q\mathbf p_+\mathbf p'_- $ is a nonzero scalar multiple of $\mathbf  p $, which is not possible  if $ \mathbf p'_- $ is a strict divisor of $ \mathbf p_- $. A similar argument works for $ \mathbf p_+, \mathbf p'_+ $.

Let us consider now again the matrix $ E $ defined by~\eqref{eq:def of E}. Since $ E\in\mathfrak{B}(\mathbf p_+,\mathbf p_-, \bm  \upchi')  $, we have
 $ \mathbf p_+(M) \bm  \upchi(M)=\mathbf p_+(M)\mathbf b'_1(M) $, 
$  \mathbf p_-(M)=\mathbf p_-(M)\mathbf b'_{1-n}(M)  $ for some polynomials $ \mathbf b_1' ,\mathbf  b'_{1-n}  $ such that $\mathbf  q $ divides
$ \mathbf b_1'  - \bm  \upchi' \mathbf b_{1-n}' $. So there are polynomials $\mathbf  u ,\mathbf v ,\mathbf w  $ such that
\begin{align}
 \bm  \upchi &=\mathbf b'_1 +\mathbf p_- \mathbf d \mathbf u ,\label{eq:21}\\
\bm 1&=\mathbf  b'_{1-n} +\mathbf p_- \mathbf d \mathbf v , \label{eq:22}\\
\mathbf b_1'  - \bm  \upchi' \mathbf b_{1-n}' &=\mathbf d \mathbf w \label{eq:23}
\end{align}
Multiplying~\eqref{eq:22} with $  \bm  \upchi'  $, substracting from~\eqref{eq:21}, and using~\eqref{eq:23},  we obtain
\[
 \bm  \upchi - \bm  \upchi' =\mathbf d \mathbf w + \mathbf p_- \mathbf d \mathbf u -\mathbf p_-\mathbf  d  \bm  \upchi'\mathbf  q\mathbf  v ,
\]
whence $\mathbf  d  $ divides $  \bm  \upchi - \bm  \upchi' $.
	\end{proof}

\begin{remark}\label{re:cases}
	(i) Some particular cases of these algebras are worth mentioning. 
	If $ \mathbf p_+=\mathbf p $, $ \mathbf p_-=\bm 1 $, and $\bm  \upchi=\bm 1 $, then condition (b) of Definition~\ref{de:algebras A(r, rho)} says that $ A_i=0 $ for $ i\ge 1 $, and it is not subject to any conditions for  $i\le 1  $. Also, since $ \mathbf q=\bm 1 $, condition (c) does not entail any restriction. So $ \mathfrak{B}(\mathbf p,\mathbf 1, \bm 1) $ is the algebra $ \mathfrak{B}_u $ of upper triangular block Toeplitz matrices with entries in $ \AA $. Similarly, $ \mathfrak{B}(\mathbf 1,\mathbf p, \bm 1) $ is the algebra $ \mathfrak{B}_l $ of lower triangular block Toeplitz matrices with entries in $ \AA $. 
	
	(ii) Another noticeable case is $ \mathbf p_+\mathbf p_-=\mathbf p$. Then condition (i) in Definition~\ref{de:algebras A(r, rho)} as well as (3)(iii) in Theorem~\ref{th:uniqueness} are automatically satisfied, 
	whence $ \mathfrak{B}(\mathbf p_+,\mathbf p_-, \bm 1\upchi) $ does depend on its third argument.
\end{remark}

In the sequel we will call  an algebra $ \BB $ \emph{generic} if it is neither contained in $ \mathfrak{B}_u $ nor in $ \mathfrak{B}_l $.

\section{Classification of algebras in the singly generated case}

The purpose of this section is to show that the algebras defined in the previous section are  all the maximal algebras of block Toeplitz matrices with entries in $ \PP(M) $. 

We start with a linear algebra lemma whose proof is left to the reader.

\begin{lemma}\label{le:general vector lemma}
	Let $E_1,E_2,\cdots E_n$ be the subspaces of vector space $E$ such that for every $k=1,2,\cdots n$, $E_k\neq E$. Then $E\neq\bigcup_{k=1}^{n} E_k$.
\end{lemma}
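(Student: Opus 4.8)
The plan is to prove this by induction on the number $n$ of subspaces, exploiting crucially that the underlying field is infinite (here $\bbC$). This is the essential hypothesis: the statement is false over finite fields — for instance $\bbC$ is indispensable, since $\mathbb{F}_2^2$ is the union of its three proper lines through the origin — so the infinitude of the scalars is exactly the feature that must be used.

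For the base case $n=1$ the hypothesis $E_1\neq E$ gives the conclusion immediately. For the inductive step I would first dispose of the redundant case: if $E_1\subseteq\bigcup_{k=2}^n E_k$, then $\bigcup_{k=1}^n E_k=\bigcup_{k=2}^n E_k$, and the inductive hypothesis applied to the $n-1$ proper subspaces $E_2,\dots,E_n$ already yields $E\neq\bigcup_{k=1}^n E_k$. Hence I may assume $E_1\not\subseteq\bigcup_{k=2}^n E_k$, so there is a vector $x\in E_1$ with $x\notin E_k$ for every $k\geq 2$. Since $E_1\neq E$, there is also a vector $y\in E\setminus E_1$.

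The heart of the argument is then a line-and-pigeonhole step. Consider the infinite family of vectors $y+tx$, $t\in\bbC$. None of them lies in $E_1$: if $y+tx\in E_1$ then, since $tx\in E_1$, we would get $y\in E_1$, contrary to the choice of $y$. Suppose now, for contradiction, that $E=\bigcup_{k=1}^n E_k$; then each $y+tx$ lies in some $E_k$ with $k\geq 2$. As $\bbC$ is infinite while there are only finitely many indices $k\in\{2,\dots,n\}$, two distinct scalars $t_1\neq t_2$ must send $y+t_1 x$ and $y+t_2 x$ into the same subspace $E_j$, $j\geq 2$. Subtracting gives $(t_1-t_2)x\in E_j$, and dividing by the nonzero scalar $t_1-t_2$ yields $x\in E_j$, contradicting $x\notin E_k$ for $k\geq 2$. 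This contradiction shows $E\neq\bigcup_{k=1}^n E_k$, completing the induction.

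I expect the only genuinely substantive point — and the one worth flagging — to be that the whole argument rests on the infinitude of the scalar field; everything else is bookkeeping (the reduction to the irredundant case and the subtraction of two collinear vectors). The pigeonhole step is precisely where infinitude is invoked, and it is the step that would break down over a finite field.
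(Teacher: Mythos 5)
Your proof is correct. There is, however, nothing in the paper to compare it against: the authors explicitly leave this lemma's proof to the reader, so your argument simply fills that gap, and it does so with the standard technique (induction to reduce to the irredundant case, then pigeonhole on the affine line $y+tx$ to force two of its points into one subspace). Your flag about the scalar field is also apt and worth keeping: as stated the lemma does not specify the field, and it is false over finite fields (e.g.\ $\mathbb{F}_2^2$ is the union of its three proper lines through the origin), but in the paper's application $E$ is the complex vector space $\BB$, so the infinitude hypothesis your pigeonhole step needs is indeed available.
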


In the rest of this section $ M $ is again a fixed nonderogatory matrix of order~$ m $, with minimal polynomial $\mathbf  p $. 

The next lemma gives some properties of generic maximal algebras.

\begin{lemma}\label{le:s_j constant} Suppose $ \BB\in \TT_{n,d}[\PP(M)] $ is a generic maximal algebra. Then:
	\begin{itemize}
		\item[\rm (i)] For any value of $ j $ there exists at least an element $ B\in\BB $ with $ \mathbf b_j\not=0 $.
		
		\item[\rm(ii)]   If we define $ \mathbf s_j  $ to be the greatest common divisor of all $ \mathbf b_j  $ with $ B\in\BB $ and $\mathbf  p $, 
		then
		$ \mathbf s_1=\dots=\mathbf s_{n-1} $ and $ \mathbf s_{-1}=\dots=\mathbf s_{-(n-1)} $.
		
		Consequently, if we denote the first polynomial in {\rm (ii)} by $ \mathbf s_+ $ and the second by $ \mathbf s_- $, any element $B\in\mathcal{B}$ can be written $\mathbf b_j(M)=\mathbf s_+(M)\tilde{\mathbf b}_j(M)$ if $ j\ge 1 $ and $\mathbf b_j(M)=\mathbf s_-(M)\tilde{\mathbf b}_j(M)$ if $ j\le -1 $.
		
		\item[\rm (iii)] There exists an element $ A\in\BB $ such that $ \tilde{\mathbf a}_j $ and $\mathbf  p $ are relatively prime for for every $j=\pm 1,\cdots\pm (n-1)$.
		
		\item[\rm (iv)]  $ \mathbf s_+\mathbf s_-$ divides $\mathbf  p$.
	\end{itemize}
	
\end{lemma}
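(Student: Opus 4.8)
My plan is to handle (i) and (ii) by symmetry, (iii) by a selection argument, and (iv)---the only genuinely hard part---by maximality. For (i): since $\BB$ is generic it lies in neither $\mathfrak B_u$ nor $\mathfrak B_l$, so some $B\in\BB$ has $\mathbf b_i\neq0$ for an index $i\geq1$ and some $B'\in\BB$ has $\mathbf b'_{i'}\neq0$ for an index $i'\leq-1$. The action of $\mathfrak S_{n-1}$ from Theorem~\ref{th:general facts about maximal algebras}(iv) permutes the pairs $(\mathbf b_k,\mathbf b_{k-n})$, $k=1,\dots,n-1$, while preserving $\BB$; applying a permutation carrying the active index to a prescribed $j$ produces an element of $\BB$ with nonzero $j$-th entry, which is (i) for all $j\neq0$ (the case $j=0$ being covered by $\DD_{n,d}[\PP(M)]\subset\BB$). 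For (ii) the same invariance shows that the set $\{\mathbf b_j:B\in\BB\}$ of realized $j$-th entries is independent of $j$ for $j\geq1$, and separately independent of $j$ for $j\leq-1$, because $\sigma$ carries the whole pair at position $k$ to position $\sigma(k)$. Equal families have equal greatest common divisor with $\mathbf p$, giving $\mathbf s_1=\dots=\mathbf s_{n-1}=:\mathbf s_+$ and $\mathbf s_{-1}=\dots=\mathbf s_{-(n-1)}=:\mathbf s_-$; the asserted factorization $\mathbf b_j=\mathbf s_\pm\tilde{\mathbf b}_j$ is just the statement that in $R:=\PP(M)\cong\bbC[X]/(\mathbf p)$ the ideal generated by these entries is principal, generated by the class of $\mathbf s_\pm$.

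For (iii) I would invoke the vector lemma (Lemma~\ref{le:general vector lemma}). For each nonzero index $j$ and each root $\lambda$ of $\mathbf p$ at which $\mathbf s_+$ (for $j\geq1$) or $\mathbf s_-$ (for $j\leq-1$) does not already absorb the full multiplicity, the assignment $A\mapsto\tilde{\mathbf a}_j(\lambda)$ is a well-defined linear functional on $\BB$, and it is not identically zero precisely because $\mathbf s_\pm$ is the gcd from (ii); hence its kernel is a proper subspace. There are only finitely many such pairs $(j,\lambda)$, so by Lemma~\ref{le:general vector lemma} their union is not all of $\BB$, and any $A$ outside it satisfies $\tilde{\mathbf a}_j(\lambda)\neq0$ at every constrained root. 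At the remaining roots $\tilde{\mathbf a}_j$ is unconstrained, so its representative may be chosen nonvanishing there, making $\tilde{\mathbf a}_j$ coprime to $\mathbf p$ for all $j$ simultaneously.

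The main obstacle is (iv), and here the product relations are useless on their own: the identity $A_iA_{j-n}=A_{i-n}A_j$ of Lemma~\ref{le:basic condition for product toeplitz} only ever forces valuations to be \emph{large}, never small, so $\mathbf s_+\mathbf s_-\mid\mathbf p$ (an upper bound on combined multiplicities) cannot follow from closure alone---indeed the $R$-module generated by a single pair of high valuation satisfies every compatibility relation while violating $\mathbf s_+\mathbf s_-\mid\mathbf p$. The bound must therefore come from maximality. I would encode $\BB$ by its common ``pair module'' $V=\{(\mathbf b_k(M),\mathbf b_{k-n}(M)):B\in\BB\}\subseteq R\times R$, which is independent of $k$ by (ii), is an $R$-submodule because $\DD_{n,d}[\PP(M)]\subset\BB$, and whose two coordinate projections are the ideals $(\mathbf s_+)$ and $(\mathbf s_-)$. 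The pivotal observation is that by Corollary~\ref{co:adding a cyclic diagonal} together with maximality, any pair $(x',y')$ compatible with all of $V$, that is, with $x'\mathbf b_{j-n}(M)=y'\mathbf b_j(M)$ for every $B\in\BB$ and all $j\geq1$, must already lie in $V$: otherwise the block Toeplitz cyclic diagonal of order $1$ carrying $(x',y')$ satisfies $D'\toep\BB$ and, by Corollary~\ref{co:adding a cyclic diagonal}, generates a strictly larger algebra of block Toeplitz matrices.

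Finally I would argue by contradiction. Suppose $\mathbf s_+\mathbf s_-\nmid\mathbf p$; decomposing $R\cong\prod_\lambda\bbC[X]/((X-\lambda)^{e_\lambda})$ by the Chinese Remainder Theorem (which splits all the compatibility conditions), there is a root $\lambda$ with $f_++f_->e_\lambda$, where $f_\pm$ denote the $\lambda$-multiplicities of $\mathbf s_\pm$; note this forces $f_+\geq1$ and $f_-\geq1$. Writing $t=X-\lambda$, consider the pair equal to $(t^{\,e_\lambda-f_-},0)$ in the $\lambda$-component and $(0,0)$ elsewhere. It is compatible with $V$, since $t^{\,e_\lambda-f_-}$ annihilates every second coordinate of $V$ (whose $\lambda$-valuation is at least $f_-$), while its second coordinate is zero; but it is not in $V$, because its first coordinate has $\lambda$-valuation $e_\lambda-f_-<f_+$, whereas every first coordinate of $V$ has $\lambda$-valuation at least $f_+$. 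This contradicts the previous paragraph, so $\mathbf s_+\mathbf s_-\mid\mathbf p$, completing the hardest step.
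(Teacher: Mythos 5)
Your proof is correct. For (i)--(iii) you follow essentially the paper's own route: the $\mathfrak{S}_{n-1}$-invariance from Theorem~\ref{th:general facts about maximal algebras} gives (i) and (ii), and (iii) is Lemma~\ref{le:general vector lemma} applied to the kernels of the functionals $A\mapsto\tilde{\mathbf a}_j(\lambda)$ for $\lambda$ a root of $\mathbf p/\mathbf s_\pm$ --- these kernels are exactly the paper's subspaces $E_{i,r}$; your explicit remark that at the remaining roots one simply adjusts the representative of $\tilde{\mathbf a}_j$ modulo $\mathbf p/\mathbf s_\pm$ fills in a point the paper leaves tacit. The genuine divergence is (iv). Both arguments run on the same engine: by Corollary~\ref{co:adding a cyclic diagonal} and maximality, any block Toeplitz cyclic diagonal with entries in $\PP(M)$ that is compatible with $\BB$ must already lie in $\BB$, so one derives a contradiction by exhibiting a compatible witness outside $\BB$. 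But the witnesses differ. The paper takes the element $A$ of (iii), replaces it by $\E_1(A)$, and strips one linear factor $\mathbf t$ off $\mathbf s_+$: its witness is $A'_1=\mathbf s_+'(M)\tilde{\mathbf a}_1(M)$, $A'_{1-n}=\mathbf s_-(M)\tilde{\mathbf a}_{1-n}(M)$, compatible with $\BB$ because the surplus multiplicity of $\mathbf t$ keeps the relevant products divisible by $\mathbf p$, and outside $\BB$ because $\tilde{\mathbf a}_1$ is coprime to $\mathbf p$. Your witness instead has vanishing second coordinate and first coordinate $(X-\lambda)^{e_\lambda-f_-}$ concentrated in a single Chinese-remainder component of $\bbC[X]/(\mathbf p)$. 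The trade-off: your (iv) is independent of (iii), using only the inclusions $\mathbf b_j(M)\in(\mathbf s_+(M))$ and $\mathbf b_{j-n}(M)\in(\mathbf s_-(M))$ from (ii), with all valuation bookkeeping localized at one root; the paper's version instead recycles the coprime element $A$ of (iii), which it needs anyway in the remainder of the section to define the polynomials $\bm\upxi_i$. Your opening observation --- that the compatibility relations alone can never force $\mathbf s_+\mathbf s_-\mid\mathbf p$, since the diagonals together with a single cyclic diagonal of high valuation already form a Toeplitz algebra --- is also correct, and it explains precisely why maximality must enter through Corollary~\ref{co:adding a cyclic diagonal}.
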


\begin{proof}
	(i) Suppose $ j\ge 1 $ is such that $ \mathbf b_j=0 $ for all $ B\in\BB $. It follows by  Theorem~\ref{th:general facts about maximal algebras} (iii) that $ \mathbf b_i=0 $ for all $ B\in\BB $ and $ i\ge 1 $. Then $ \BB $ is formed only by upper triangular matrices. A similar argument implies that if $ j\le -1 $ is such that $ \mathbf b_j=0 $ for all $ B\in\BB $, then $ \BB $ is formed only by lower triangular matrices.

	(ii) is also an immediate consequence of Theorem~\ref{th:general facts about maximal algebras} (iii). 
	
	(iii) 
	Denote by $\mathbf t^+_1,\dots, \mathbf t^+_{r_+} $ the first degree factors of $\mathbf  p/\mathbf s_+ $ and by $\mathbf t^-_1,\dots, \mathbf t^-_{r_-} $ the first degree factors of $ \mathbf p/\mathbf s_- $. Define the subspaces 
	\[
	\begin{split}
	E_{i,r}&=\{B\in\BB:\mathbf  t^+_r|\tilde{\mathbf b}_i \},\quad
	i=1,\dots, n-1, \quad r=1, \dots, r_+,\\
		E_{i,r}&=\{B\in\BB: \mathbf t^-_r|\tilde{\mathbf b}_i \},\quad
	i=-(n-1),\dots, -1, \quad r=1, \dots, r_-.
	\end{split}
	\]
	Take $ i\ge1 $; then	$ E_{i,r}\not=\BB $, since otherwise  we could take 
	  $ \mathbf s_+ \mathbf t^+_r $ instead of $ \mathbf s_i$ (actually, instead of $ \mathbf s_+ $ by (ii)). Similarly, $ E_{i,r}\not=\BB $ for $ i\le -1 $, since otherwise we could take $ \mathbf s_- \mathbf t^-_r $ instead of $ \mathbf s_-$.  By Lemma~\ref{le:general vector lemma} it follows that there exists $ A\in\BB $ which does not belong to any of $ E_{i,r} $; that $ A $ is the required element.
	
	(iv)
	Take  $A\in \BB $ obtained at point (iii); we may also assume, for simplicity, that $ A\in\CD_1 $. For an arbitrary
	 $ B\in\BB $ arbitrary, using again the notation of (ii), condition~\eqref{prod} implies that
	\begin{equation}\label{eq:division}
	\mathbf p | \mathbf s_+\mathbf s_-(\tilde{\mathbf a}_1\tilde{\mathbf b}_{i-n}-\tilde{\mathbf a}_{1-n}\tilde{\mathbf b}_1)
	\end{equation}
	 for all $ i\ge 1 $ .
	
	 Since $ \mathbf s_+, \mathbf s_- $ are both divisors of $ \mathbf p $, if $ \mathbf s_+\mathbf s_- $ does not divide $ \mathbf p $ it follows that among the divisors of $\mathbf  p $ of degree 1 there exists at least one, say $ \mathbf t $, that is a divisor of $ \mathbf s_+ $ and has strictly larger degree in $ \mathbf s_+\mathbf s_- $ than in $\mathbf  p $. If $ \mathbf s_+=\mathbf t \mathbf s_+' $, then~\eqref{eq:division} implies that  $\mathbf  p $ divides also $ \mathbf s_+'\mathbf s_-(\tilde{\mathbf a}_1\tilde{\mathbf b}_{i-n}-\tilde{\mathbf a}_{1-n}\tilde{\mathbf b}_1) $. 
	 
	 Consider then the Toeplitz cyclic diagonal $ A' $ of order~1 defined by 
	 \[
	  A'_1= \mathbf s_+'(M)\tilde {\mathbf a}_1(M) ,\quad  A'_{1-n}=\mathbf s_-(M) \tilde{\mathbf a}_{1-n}(M) .
	 \]
	  It is not in $ \BB $ since $\mathbf  s'_+ $ is a strict divisor of $ \mathbf s_+ $ and $ \tilde{\mathbf a}_1 $ is relatively prime with $\mathbf  p $,	
	but it satisfies~\eqref{prod} for all $ B\in \BB $. 
	By Corollary~\ref{co:adding a cyclic diagonal} the algebra generated by $ \BB $ and $ A $ is formed by block Toeplitz matrices, and it would contain $ \BB $ strictly---a contradiction. It follows therefore that $ \mathbf s_+\mathbf s_- $ divides $\mathbf p$.
\end{proof}

We will then denote $ \mathbf d=\mathbf p/\mathbf s_+\mathbf s_- $.
Fixing $ A $ as in Lemma~\ref{le:s_j constant} (iii), take $ i\ge1 $.
Since $ \tilde{\mathbf a}_{i-n} $ is relatively prime to $\mathbf  p $, we may choose 
 $ \bm  \upgamma_i\in\mathbb{C}[X] $ such that  $\mathbf  p $ divides $ \bm  \upgamma_i \tilde{\mathbf a}_{i-n} -1$, and define 
\[
\bm  \upxi_i =\tilde{\mathbf a}_i \bm   \upgamma_i .
\]

\begin{lemma}\label{le:preparatory for reciprocal}
	With the above notations,   $ \mathbf  d $ divides
	 $  \bm  \upxi_i - \bm  \upxi_j  $ for any $ i,j\ge 1$.
\end{lemma}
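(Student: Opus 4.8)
The plan is to derive the assertion from a single instance of the multiplication condition~\eqref{prod}, namely the compatibility of the fixed element $A$ with itself, and then to "invert" the entries $\tilde{\mathbf a}_{i-n}$ modulo $\mathbf d$ by means of the polynomials $\bm\upgamma_i$. First I would note that since $A\in\BB$ and $\BB$ is an algebra of block Toeplitz matrices, the power $A^2$ is again block Toeplitz, i.e. $A\toep A$; hence Lemma~\ref{le:basic condition for product toeplitz}(i) gives $A_iA_{j-n}=A_{i-n}A_j$ for all $i,j=1,\dots,n-1$. Writing $A_i=\mathbf s_+(M)\tilde{\mathbf a}_i(M)$ and $A_{i-n}=\mathbf s_-(M)\tilde{\mathbf a}_{i-n}(M)$ as in Lemma~\ref{le:s_j constant}(ii), and recalling that for the nonderogatory matrix $M$ evaluation induces an isomorphism $\bbC[X]/(\mathbf p)\cong\PP(M)$ (so that $\mathbf f(M)=\mathbf g(M)$ iff $\mathbf p\mid \mathbf f-\mathbf g$), this matrix identity becomes
\[
\mathbf p\mid \mathbf s_+\mathbf s_-\big(\tilde{\mathbf a}_i\tilde{\mathbf a}_{j-n}-\tilde{\mathbf a}_{i-n}\tilde{\mathbf a}_j\big).
\]
Since $\mathbf p=\mathbf s_+\mathbf s_-\mathbf d$ and $\bbC[X]$ is an integral domain, dividing out the common factor $\mathbf s_+\mathbf s_-$ yields the key congruence
\[
\tilde{\mathbf a}_i\tilde{\mathbf a}_{j-n}\equiv \tilde{\mathbf a}_{i-n}\tilde{\mathbf a}_j \pmod{\mathbf d}\qquad(i,j\ge 1).
\]

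To finish I would work in the quotient ring $\bbC[X]/(\mathbf d)$. By construction $\bm\upgamma_k\tilde{\mathbf a}_{k-n}\equiv 1\pmod{\mathbf p}$, hence also modulo the divisor $\mathbf d$, so $\bm\upgamma_k$ is a multiplicative inverse of $\tilde{\mathbf a}_{k-n}$ modulo $\mathbf d$ and $\bm\upxi_k=\tilde{\mathbf a}_k\bm\upgamma_k$. Multiplying the key congruence by $\bm\upgamma_i\bm\upgamma_j$ and regrouping the factors gives
\[
(\tilde{\mathbf a}_i\bm\upgamma_i)(\tilde{\mathbf a}_{j-n}\bm\upgamma_j)\equiv (\tilde{\mathbf a}_{i-n}\bm\upgamma_i)(\tilde{\mathbf a}_j\bm\upgamma_j)\pmod{\mathbf d}.
\]
Replacing $\tilde{\mathbf a}_{j-n}\bm\upgamma_j$ and $\tilde{\mathbf a}_{i-n}\bm\upgamma_i$ by $1$ leaves $\bm\upxi_i\equiv\bm\upxi_j\pmod{\mathbf d}$, which is precisely the assertion that $\mathbf d$ divides $\bm\upxi_i-\bm\upxi_j$.

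The argument is essentially a clean modular computation rather than a delicate estimate, so no serious obstacle is expected; the only points requiring care are the two passages where a matrix identity is converted into a polynomial congruence via the isomorphism $\bbC[X]/(\mathbf p)\cong\PP(M)$, and the legitimacy of cancelling the factor $\mathbf s_+\mathbf s_-$ against $\mathbf p=\mathbf s_+\mathbf s_-\mathbf d$. One must also make sure that the relation~\eqref{prod} is genuinely available for $A$ paired with itself (this is exactly where the algebra property of $\BB$ enters) and that $\bm\upgamma_i$ inverts $\tilde{\mathbf a}_{i-n}$ not only modulo $\mathbf p$ but also modulo $\mathbf d$, which is automatic because $\mathbf d\mid\mathbf p$.
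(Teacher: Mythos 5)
Your proof is correct and is essentially the paper's own argument: the only difference is that you obtain the key identity $A_iA_{j-n}=A_{i-n}A_j$ from $A\toep A$ (i.e.\ from $A^2\in\BB$ being block Toeplitz), while the paper obtains the very same identity from the product $\mathbf{E}_i(A)\mathbf{E}_j(A)\in\BB$ via Theorem~\ref{th:general facts about maximal algebras}~(ii). From that point on the two arguments coincide: cancel $\mathbf s_+\mathbf s_-$ against $\mathbf p=\mathbf s_+\mathbf s_-\mathbf d$ to pass to a congruence modulo $\mathbf d$, then multiply by $\bm\upgamma_i\bm\upgamma_j$ and use $\bm\upgamma_k\tilde{\mathbf a}_{k-n}\equiv \bm 1 \pmod{\mathbf d}$.
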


\begin{proof}
Theorem~\ref{th:general facts about maximal algebras} (ii) implies that $ \mathbf{E}_k(A)\in \mathcal{B} $ for all $ k\ge 1 $. Therefore $ \mathbf{E}_i(A) \mathbf{E}_j(A)\in \mathcal{B} $ is a block Toeplitz matrix and satisfies~\eqref{prod}.
	Thus
	\begin{equation*}\label{eq:prep1}
	\mathbf s_+(M)\mathbf s_{-}(M)\tilde{\mathbf a}_i(M)\tilde{\mathbf a}_{j-n}(M)=
\mathbf 	s_+(M)\mathbf s_{-}(M)\tilde{\mathbf a}_j(M)\tilde{\mathbf a}_{i-n}(M),
	\end{equation*}
	whence 
\[
\tilde{\mathbf a}_k \tilde{\mathbf a}_{j-n} 
-\tilde{\mathbf a}_j \tilde{\mathbf a}_{k-n} = \mathbf d \mathbf u 
\]
for some polynomial $\mathbf  u $. Multiplying with $\bm  \upgamma_i \bm  \upgamma_j  $, we have 
\[
\bm  \upgamma_i \tilde{\mathbf a}_i\bm   \upgamma_j \tilde{\mathbf a}_{j-n} 
-\bm  \upgamma_j \tilde{\mathbf a}_j \bm  \upgamma_i \tilde{\mathbf a}_{i-n} 
=\bm  \upgamma_i \bm  \upgamma_j \mathbf d\mathbf  u .
\]
The definitions of $\bm   \upgamma_i,\bm   \upgamma_j $ and $  \bm  \upxi_i,  \bm  \upxi_j $	imply then that 
\[
 \bm  \upxi_i - \bm  \upxi_j = \mathbf d \mathbf  v 
\]
for some polynomial $ \mathbf v  $.
\end{proof}

Define then $  \bm  \upxi = \bm  \upxi_1  $. We get now  close to the main result of this section.

\begin{corollary}\label{co:main converse}
	Suppose that $ \mathcal{B} $ is a maximal generic algebra in $\mathcal{T}_{n,d}$, with entries in $ \mathcal{P}(M) $. If $ \mathbf s_+, \mathbf s_-,  \bm  \upxi $ are defined as above, then $ \mathcal{B}=\mathfrak{B}({\mathbf s_+,\mathbf s_-, \bm  \upxi}) $.
\end{corollary}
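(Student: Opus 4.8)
The plan is to prove the single inclusion $\mathcal{B}\subseteq\mathfrak{B}(\mathbf s_+,\mathbf s_-,\bm\upxi)$ and then deduce equality from maximality. Before that, I would check that the triple $(\mathbf s_+,\mathbf s_-,\bm\upxi)$ is admissible in the sense of Definition~\ref{de:algebras A(r, rho)}: $\mathbf s_+\mathbf s_-$ divides $\mathbf p$ by Lemma~\ref{le:s_j constant}(iv), so that $\mathbf q=\mathbf p/\mathbf s_+\mathbf s_-=\mathbf d$; and $\bm\upxi=\tilde{\mathbf a}_1\bm\upgamma_1$ is relatively prime to $\mathbf p$, because $\tilde{\mathbf a}_1$ is so by the choice of $A$ in Lemma~\ref{le:s_j constant}(iii), while $\bm\upgamma_1$ is invertible modulo $\mathbf p$ by its defining relation $\mathbf p\mid\bm\upgamma_1\tilde{\mathbf a}_{1-n}-1$. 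By Theorem~\ref{th:algebras A(r, rho)}, $\mathfrak{B}(\mathbf s_+,\mathbf s_-,\bm\upxi)$ is then a commutative algebra of block Toeplitz matrices with entries in $\mathcal{P}(M)$, hence a genuine subalgebra of $\mathcal{T}_{n,d}[\mathcal{P}(M)]$.

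For the inclusion, I would fix $B\in\mathcal{B}$ and verify conditions (a)--(c) of Definition~\ref{de:algebras A(r, rho)} for the triple $(\mathbf s_+,\mathbf s_-,\bm\upxi)$. Condition (a) holds because $B$ has entries in $\mathcal{P}(M)$, and condition (b), namely $\mathbf b_j(M)=\mathbf s_+(M)\tilde{\mathbf b}_j(M)$ for $j\ge1$ and $\mathbf b_j(M)=\mathbf s_-(M)\tilde{\mathbf b}_j(M)$ for $j\le-1$, is exactly the factorization provided by Lemma~\ref{le:s_j constant}(ii). The only substantive point is (c).

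To establish (c) I would exploit the distinguished element $A$. Since $A,B\in\mathcal{B}$ and $\mathcal{B}$ consists of block Toeplitz matrices, $AB$ is block Toeplitz, so $A\toep B$ and the compatibility relation~\eqref{prod} of Lemma~\ref{le:basic condition for product toeplitz}(i) holds. Reading it off at the index pair $(1,i)$ gives $A_1B_{i-n}=A_{1-n}B_i$ for every $i\ge1$; inserting the factorizations of the four entries turns this into the statement that $\mathbf p$ divides $\mathbf s_+\mathbf s_-(\tilde{\mathbf a}_1\tilde{\mathbf b}_{i-n}-\tilde{\mathbf a}_{1-n}\tilde{\mathbf b}_i)$, equivalently that $\mathbf d$ divides $\tilde{\mathbf a}_1\tilde{\mathbf b}_{i-n}-\tilde{\mathbf a}_{1-n}\tilde{\mathbf b}_i$. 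Multiplying this last divisibility by $\bm\upgamma_1$ and using $\bm\upgamma_1\tilde{\mathbf a}_{1-n}\equiv1$ modulo $\mathbf d$ (which follows from $\mathbf p\mid\bm\upgamma_1\tilde{\mathbf a}_{1-n}-1$ together with $\mathbf d\mid\mathbf p$), the term $\bm\upgamma_1\tilde{\mathbf a}_1\tilde{\mathbf b}_{i-n}$ becomes $\bm\upxi\tilde{\mathbf b}_{i-n}$ and the term $\bm\upgamma_1\tilde{\mathbf a}_{1-n}\tilde{\mathbf b}_i$ becomes $\tilde{\mathbf b}_i$; hence $\mathbf d$ divides $\tilde{\mathbf b}_i-\bm\upxi\tilde{\mathbf b}_{i-n}$, which is precisely (c). This is the same manoeuvre already performed in Lemma~\ref{le:preparatory for reciprocal}. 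Therefore $B\in\mathfrak{B}(\mathbf s_+,\mathbf s_-,\bm\upxi)$, and since $B$ was arbitrary, $\mathcal{B}\subseteq\mathfrak{B}(\mathbf s_+,\mathbf s_-,\bm\upxi)$. As the right-hand side is a subalgebra of $\mathcal{T}_{n,d}[\mathcal{P}(M)]$ and $\mathcal{B}$ is maximal therein, the inclusion is forced to be an equality.

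I expect no serious obstacle, only two points requiring care. The first is purely combinatorial: reading~\eqref{prod} at exactly the pair $(1,i)$, so that only the entries $A_1,A_{1-n},B_i,B_{i-n}$ survive and the relation reduces to a single divisibility. The second is the legitimacy of the fixed parameter $\bm\upxi=\bm\upxi_1$: the argument above validates (c) for every $i\ge1$ using this one choice, and the fact that it could equally have been built from any $\bm\upxi_i$ is exactly the content of Lemma~\ref{le:preparatory for reciprocal}, which guarantees that all the $\bm\upxi_i$ agree modulo $\mathbf d$. Everything else is supplied by the structural results already at hand---closure under the projections $\mathbf{E}_k$, the factorizations of Lemma~\ref{le:s_j constant}, and the maximality of $\mathcal{B}$.
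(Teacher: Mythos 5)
Your proof is correct and follows essentially the same route as the paper's: read off the compatibility relation~\eqref{prod} for $A\toep B$, cancel $\mathbf s_+\mathbf s_-$ against $\mathbf p$ to get divisibility by $\mathbf d$, multiply by $\bm\upgamma$ to produce condition (c), and conclude by maximality. The only (harmless) differences are that by fixing the index $1$ from the start you bypass the paper's final appeal to Lemma~\ref{le:preparatory for reciprocal}, and that you explicitly verify the admissibility of the triple $(\mathbf s_+,\mathbf s_-,\bm\upxi)$ (in particular that $\bm\upxi$ is relatively prime to $\mathbf p$), which the paper leaves implicit.
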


\begin{proof}
	It is enough to prove that $ \mathcal{B}\subset \mathfrak{B}({\mathbf s_+,\mathbf s_-, \bm  \upxi})  $, since  maximality implies  that  the inclusion is actually an equality.
		
	Take $ B\in \mathcal{B} $.	
Since $ AB\in\mathcal{B} $, it follows by Lemma~\ref{le:basic condition for product toeplitz} that
	\[
	\mathbf s_+(M)\mathbf s_-(M)\tilde{\mathbf a}_i(M)\tilde{\mathbf b}_{j-n}(M)=\mathbf s_+(M)\mathbf s_-(M)\
\tilde{\mathbf a}_{i-n}(M)\tilde{\mathbf b}_{j}(M),
	\]
	which is equivalent to 
	\[
	\tilde{\mathbf a}_i \tilde{\mathbf b}_{j-n} -
	\tilde{\mathbf a}_{i-n} \tilde{\mathbf b}_{j} =\mathbf d\mathbf  u 
	\]
	for some polynomial $\mathbf  u  $.
	Multiplying with $ \bm \upgamma_i  $, we obtain the  relation
	\[
 \bm  \upxi_i \tilde{\mathbf b}_{j-n} -	\tilde{\mathbf b}_j =\mathbf d \mathbf v 
	\]	
	for some polynomial $\mathbf  v  $. A final application of Lemma~\ref{le:preparatory for reciprocal} implies that we can replace in the last equality $  \bm  \upxi_i $ by $  \bm  \upxi_1= \bm  \upxi $. Therefore $ B\in \mathfrak{B}({\mathbf s_+,\mathbf s_-, \bm  \upxi})  $, which finishes the proof.
\end{proof}

We can now finish the classification by removing the genericity condition.

\begin{theorem}\label{th:general classification}
	A maximal algebra $ \BB $ of block Toeplitz matrices with entries in $ \PP(M) $ is of type $ \mathfrak{B}({\mathbf p_+,\mathbf p_-, \bm  \upchi})  $ for some polynomials $ \mathbf p_+, \mathbf p_-,  \bm  \upchi  $, with $ \mathbf p_+\mathbf p_-$ dividing $\mathbf p $ and $  \bm  \upchi, \mathbf p  $ relatively prime.
\end{theorem}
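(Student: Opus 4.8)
The plan is to reduce everything to the generic case already settled in Corollary~\ref{co:main converse}, and to dispose of the two non-generic cases by a direct maximality argument. Every maximal algebra $\BB\subseteq\TT_{n,d}[\PP(M)]$ falls, by the very definition of \emph{generic} from the end of Section~6, into one of three situations: either $\BB$ is generic, or $\BB\subseteq\mathfrak{B}_u$, or $\BB\subseteq\mathfrak{B}_l$. (The degenerate possibility that $\BB$ lies in \emph{both} $\mathfrak{B}_u$ and $\mathfrak{B}_l$, i.e.\ that $\BB$ is purely diagonal, is excluded by Theorem~\ref{th:general facts about maximal algebras}(v), but this is not even needed since the three cases above already cover all algebras.)

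First I would treat the generic case. Here Corollary~\ref{co:main converse} applies verbatim and yields $\BB=\mathfrak{B}(\mathbf s_+,\mathbf s_-,\bm\upxi)$. What remains is to confirm that these data are admissible in the sense of Definition~\ref{de:algebras A(r, rho)}. The divisibility $\mathbf s_+\mathbf s_-\mid\mathbf p$ is exactly Lemma~\ref{le:s_j constant}(iv), and $\bm\upxi$ is relatively prime to $\mathbf p$ because, by its construction, $\bm\upxi=\tilde{\mathbf a}_1\bm\upgamma_1$, where $\tilde{\mathbf a}_1$ is coprime to $\mathbf p$ by Lemma~\ref{le:s_j constant}(iii) and $\bm\upgamma_1$ is a multiplicative inverse of $\tilde{\mathbf a}_{1-n}$ modulo $\mathbf p$, hence also coprime to $\mathbf p$. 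Thus the generic case already produces an algebra of the advertised type.

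For the non-generic case it suffices to treat $\BB\subseteq\mathfrak{B}_u$, the case $\BB\subseteq\mathfrak{B}_l$ being handled identically. The key point is that $\mathfrak{B}_u$ is not merely a linear subspace but an algebra of block Toeplitz matrices: by Remark~\ref{re:cases}(i) we have $\mathfrak{B}_u=\mathfrak{B}(\mathbf p,\bm 1,\bm 1)$, and Theorem~\ref{th:algebras A(r, rho)} guarantees that the right-hand side is a commutative algebra contained in $\TT_{n,d}[\PP(M)]$. Since $\BB\subseteq\mathfrak{B}_u$ and $\BB$ is maximal among algebras of block Toeplitz matrices, maximality forces $\BB=\mathfrak{B}_u=\mathfrak{B}(\mathbf p,\bm 1,\bm 1)$; symmetrically, $\BB\subseteq\mathfrak{B}_l$ gives $\BB=\mathfrak{B}_l=\mathfrak{B}(\bm 1,\mathbf p,\bm 1)$. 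In both situations the parameters plainly satisfy the admissibility conditions ($\mathbf p_+\mathbf p_-=\mathbf p\mid\mathbf p$ and $\bm\upchi=\bm 1$ coprime to $\mathbf p$).

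I do not anticipate a genuine obstacle, since all the substantive work was carried out in Corollary~\ref{co:main converse}; the role of this theorem is only to glue the generic and non-generic cases together. The one place that calls for a little care is the admissibility verification in the generic case, namely checking that $\bm\upxi$ is coprime to $\mathbf p$, together with the (immediate) observation that $\mathfrak{B}_u$ and $\mathfrak{B}_l$ are themselves block Toeplitz algebras, so that maximality can be invoked to collapse the non-generic inclusions into equalities.
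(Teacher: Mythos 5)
Your proof is correct and takes essentially the same route as the paper's: split into the generic case, which is settled by Corollary~\ref{co:main converse} with $\mathbf p_+=\mathbf s_+$, $\mathbf p_-=\mathbf s_-$, $\bm\upchi=\bm\upxi$, and the non-generic case, where the inclusion $\BB\subseteq\mathfrak{B}_u$ (or $\mathfrak{B}_l$) is collapsed to an equality by maximality. The only differences are cosmetic and slightly to your credit: you explicitly verify that the data $(\mathbf s_+,\mathbf s_-,\bm\upxi)$ are admissible (the paper leaves this implicit), and in the non-generic case you invoke only the maximality of $\BB$ together with the fact that $\mathfrak{B}_u=\mathfrak{B}(\mathbf p,\bm 1,\bm 1)$ is a block Toeplitz algebra, rather than the paper's unproved (though true) assertion that $\mathfrak{B}_u$ and $\mathfrak{B}_l$ are themselves maximal.
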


\begin{proof}
	If $ \BB $ is generic, we know from Corollary~\ref{co:main converse} that the statement is true by taking $ \mathbf p_+=\mathbf s_+ $, $ \mathbf p_-=\mathbf s_- $, $ \bm\upchi=\bm\upxi $.
	 If  $ \BB $ is not generic, then, by definition, it must be  contained either in 
	 $\mathfrak{B}_u = \mathfrak{B}(\mathbf p,\mathbf 1, \bm 1)$ or in $ \mathfrak{B}_l =\mathfrak{B}(\mathbf 1,\mathbf p, \bm 1) $. But these  are both maximal algebras, so $ \BB $ has to be one of them.
	\end{proof}

\begin{example}\label{ex:k+k-}
	Suppose $ \mathbf{p}(X)=X^m $ for some positive integer $ m $. The divisors of~$ \mathbf{p} $ are nonzero multiples of $ X^k $ for $ 0\le k\le m $, while a polynomial is prime with $ X^m $ precisely when it has nonzero free term. We leave then to the reader to check that the above classification yields  a one-to-one correspondence between maximal algebras and the union $ \bm T\cup\bm P $, where:
	
	\begin{itemize}
		\item $ \bm T $ is the set of triples $ (k_+, k_-, \bm \upchi) $, with $ k_+, k_- $  nonnegative integers such that $ k_++k_-< m $, while $ \bm \upchi $ is a polynomial of degree less or equal to $m-k_+-k_- -1 $ with nonzero free term. The corresponding maximal algebra is $ \mathfrak{B}(X^{k_+} ,X^{k_-},  \bm \upchi) $.
		
		\item
		$ \bm P $ is the set of pairs $ (k_+, k_-) $, with $ k_+, k_- $  nonnegative integers with $ k_++k_-= m $.  The corresponding maximal algebra is $ \mathfrak{B}(X^{k_+} ,X^{k_-},  \bm 1) $.
	\end{itemize}

	In the second case (which corresponds to Remark~\ref{re:cases} (ii)) it is easy to see that for $ k_+, k_-<m $ the algebra $ \mathfrak{B}(X^{k_+} ,X^{k_-},  \bm 1) $ is not a pseudocirculant algebra. Indeed, assuming this true, let  $ Y=(Y_{i-j})_{i,j=0}^{n-1} $ be defined by $ Y_i=M^{k_+} $ for $ i\ge 0 $ and $ Y_i=0 $ for $ i<0 $. Then $ Y\in \mathfrak{B}(X^{k_+} ,X^{k_-},  \bm 1) $, so we should have $ AM^{k_+}=0 $. Therefore $ A=a(M) $ is not invertible. Since invertible elements in $ \PP(M) $ are characterized by nonzero free term, it follows that $ a(M) $ has $ M $ as a factor. 
	
	Similarly, by considering $ Y'=(Y'_{i-j})_{i,j=0}^{n-1} $,  defined by $ Y'_i=M^{k_-} $ for $ i\le 0 $ and $ Y'_i=0 $ for $ i>0 $, one obtains that $ B=b(M) $ is not invertible, and thus $ b(M) $ has also $ M $ as a factor. Therefore $ \ker A\cap\ker B\supset \ker M $, and condition~\eqref{eq:kerAkerB} cannot be satisfied. 
	
\end{example}


\end{document}